\newtheorem{theorem}{Theorem}
\theoremstyle{plain}
\newtheorem{corollary}{Corollary}
\newtheorem{definition}{Definition}
\newtheorem{example}{Example}
\newtheorem{lemma}{Lemma}
\newtheorem{proposition}{Proposition}
\numberwithin{equation}{section}
\begin{document}
\title[]{Aspects on weak, $s$-cs and almost injective rings}
\author{Nasr. A. Zeyada}
\address{Cairo University, Faculty of Science, Department of Mathematics,
Egypt}
\curraddr{University of Jeddah, Faculty of Science, Department of
Mathematics, Saudi Arabia}
\email{nasmz@yahoo.com}
\author{Amr K. Amin}
\curraddr{Umm Al-Qura University, University college, Departement of
Mathematics, Saudi Arabia}
\address{Beni-Suef University, Faculty of Science, Departement of
Mathematics, Egypt}
\email{althan72@yahoo.com}
\date{}
\subjclass{}
\keywords{$CS$, weak-$CS$, $S$-$CS$, rad-injective rings, almost injective
rings, Kasch rings, Quasi-Frobenius rings.}
\dedicatory{}
\thanks{}

\begin{abstract}
It is not known whether right $CF$-rings ($FGF$-rings) are right artinian
(quasi-Frobenius). This paper gives a positive answer of this question in
the case of weak $CS$ ($s$-$CS$)\ and $GC2$ rings. Also we get some new
results on almost injective rings.
\end{abstract}

\maketitle

\section{Introduction}

A module $M$ is said to satisfy $C1$-condition or called $CS$-module if
every submodule of $M$ is essential in a direct summand of $M$. Patrick F.
Smith \cite{Smith} introduced weak $CS$ modules. A right $R$-module $M$ is
called weak $CS$ if every semisimple submodule of $M$ is essential in a
summand of $M$. I. Amin, M. Yousif and N. Zeyada \cite{Amin} introduced $soc$%
-injective and strongly $soc$-injective modules, for any two modules $M$ and 
$N$, $M$ is $soc$-$N$-injective if any $R$-homomorphism $f:soc(N)%
\longrightarrow M$ extends to $N$. $R$ is called right (self-) $soc$%
-injective, if the right $R$-module $R_{R}$ is $soc$-injective. $M$ is
strongly $soc$-injective if $M$ is $soc$-$N$-injective for any module $N$.
They proved that every strongly $soc$-injective module is weak $CS$.

N. Zeyada \cite{Zeyada} introduced the notion of $s$-$CS$, for any right $R$%
-module $M$, $M$ is called $s$-$CS$ if every semisimple submodule of $M$ is
essential in a summand of $M$.

A ring $R$ is called a right $CF$ ring if every cyclic right $R$-module can
be embedded in a free module. A ring $R$ is called a right $FGF$ ring if
every finitely generated right $R$-module can be embedded in a free right $R$%
-module. In section 2, we show that the right $CF$, weak $CS$ ($s$-$CS$) and 
$GC2$ rings are artinian.

Zeyada, Hussein and Amin introduced the notions of almost and
rad-injectivity \cite{Zey&Huis&Amin}. In the third section we make a
correction to the result \cite[Theorem 2.12]{Zey&Huis&Amin} and we get a new
results using these notions.

Throughout this paper $R$ is an associative ring with identity and all
modules are unitary $R$-modules. For a right $R$-module $M$, we denote the
socle of $M$ by $soc(M)$. $S_{r}$ and $S_{l}$ are used to indicate the right
socle and the left socle of $R$, respectively. For a submodule $N$ of $M$,
the notations $N\subseteq ^{ess}M$ and $N\subseteq ^{\oplus }M$ mean that $N$
is essential and direct summand, respectively. We refer to \cite{And&Fuller}%
, \cite{Ding&Huynh}, \cite{Faith book}, \cite{Kasch} and \cite{Moh&Mull} for
all undefined notions in this paper.

\section{Generalizations of $CS$-modules and rings}

\begin{lemma}
\label{charac of weak}For a right $R$-module $M$, the following statements
are equivalent:

\begin{enumerate}
\item $M$ is weak $CS$.

\item $M=E$ $\oplus T$ where $E$ is $CS$ with $soc(M)\subseteq ^{ess}E$.

\item For every semisimple submodule $A$ of $M$, there is a decomposition $%
M=M_{1}\oplus M_{2}$ such that $A\subseteq M_{1}$ and $M_{2}$ is a
complement of $A$ in $M$.
\end{enumerate}
\end{lemma}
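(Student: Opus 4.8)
The plan is to establish the cycle $(2)\Rightarrow(1)\Rightarrow(3)\Rightarrow(2)$, exploiting that two of the three links are soft and concentrating the effort on the passage into $(2)$. The implication $(2)\Rightarrow(1)$ is immediate: if $M=E\oplus T$ with $E$ a $CS$-module and $soc(M)\subseteq^{ess}E$, then any semisimple submodule $A$ of $M$ lies in $soc(M)\subseteq E$, so by the $CS$-property $A\subseteq^{ess}K$ for some $K\subseteq^{\oplus}E$; since $K\subseteq^{\oplus}E\subseteq^{\oplus}M$ we get $K\subseteq^{\oplus}M$, and $M$ is weak $CS$.

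For $(1)\Rightarrow(3)$ I would argue by complements. Given a semisimple $A\subseteq M$, weak $CS$ yields $A\subseteq^{ess}M_1$ with $M=M_1\oplus M_2$. I then check that $M_2$ is a complement of $A$: if $M_2\subseteq M_2'$ with $A\cap M_2'=0$, projecting $M_2'$ onto $M_1$ along $M_2$ is injective (its kernel is $M_2'\cap M_2=M_2$, and one checks the image meets $A$ trivially), so $A\subseteq^{ess}M_1$ forces the image to be $0$ and hence $M_2'=M_2$. The reverse reading, which I will reuse, is the modularity computation: if $M=M_1\oplus M_2$ with $A\subseteq M_1$ and $M_2$ a complement of $A$, then $A\oplus M_2\subseteq^{ess}M$, and intersecting with the summand $M_1$ gives $A=(A\oplus M_2)\cap M_1\subseteq^{ess}M_1$, so $A$ is essential in the summand $M_1$.

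For $(3)\Rightarrow(2)$ I would first apply $(3)$ to the semisimple submodule $soc(M)$ itself, obtaining $M=E\oplus T$ with $soc(M)\subseteq^{ess}E$ by the modularity remark above. This immediately forces $soc(E)=soc(M)\subseteq^{ess}E$ and $soc(T)=T\cap soc(M)=0$. The remaining and decisive task is to prove that $E$ is a $CS$-module. Here I would take a closed submodule $C\subseteq E$, note $soc(C)=C\cap soc(E)\subseteq^{ess}C$ because $E$ has essential socle, and apply the hypothesis to the semisimple submodule $soc(C)$ to produce a summand $K$ of $M$ with $soc(C)\subseteq^{ess}K$. Using $soc(C)\subseteq E$ together with $soc(T)=0$ I would show $C$ meets a complement of $K$ trivially, so that $C$ embeds into $K$, and observe that both $C$ (closed, with essential socle) and $K$ (a summand, hence closed) are closures of $soc(C)$.

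The main obstacle is precisely this last identification: turning ``$soc(C)$ is essential in the summand $K$'' into ``$C$ is essential in a summand of $E$.'' Everything reduces to forcing the closed submodule $C$ to agree with (or be essential in) the summand $K$ attached to $soc(C)$, i.e.\ to a uniqueness statement for closures of the semisimple submodule $soc(C)$. This is where the argument must genuinely use that $E$ is the closure of the \emph{full} socle, and upgrading the semisimple (weak) condition to the full $CS$-property for $E$ is the step I expect to require the most care.
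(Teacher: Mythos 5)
Your three easy links---$(2)\Rightarrow(1)$, $(1)\Rightarrow(3)$, and $(3)\Rightarrow(1)$ via the modularity computation $A=(A\oplus M_{2})\cap M_{1}$---are correct, and they coincide in substance with the paper's arguments (your $(2)\Rightarrow(1)$ is in fact stated more carefully than the paper's). But the proposal is not a proof of the lemma, because the entire content of the statement lies in the passage into $(2)$: showing that the summand $E$ with $soc(M)\subseteq ^{ess}E$ is $CS$. There you only outline a strategy and then flag its decisive step as unresolved: from a closed submodule $C\subseteq E$ you produce a summand $K\subseteq ^{\oplus }M$ with $soc(C)\subseteq ^{ess}K$, and nothing forces the two essential extensions $C$ and $K$ of $soc(C)$ to be comparable. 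That identification---in effect a uniqueness statement for closures of the semisimple submodule $soc(C)$---is exactly what is missing, so the proposal has a genuine gap.

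You should know, however, that this gap cannot be filled: the step you isolated is precisely where the paper's own proof breaks down, and the implication $(1)\Rightarrow (2)$ is in fact false. The paper chooses a summand $L$ with $soc(K)\subseteq ^{ess}L$ and asserts ``$L\subseteq ^{ess}(K+L)$, but $L$ is closed, so $K\subseteq L$''; the first assertion is the unjustified uniqueness-of-closure claim (two essential extensions of the same semisimple module need not sum to an essential extension of either). For a counterexample to the lemma itself, take $M=\mathbb{Z}/p\mathbb{Z}\oplus \mathbb{Z}/p^{3}\mathbb{Z}$ over $\mathbb{Z}$. Every semisimple submodule of $M$ is essential in a direct summand: the full socle $\langle (1,0)\rangle \oplus \langle (0,p^{2})\rangle $ is essential in the summand $M$ itself; $\langle (0,p^{2})\rangle $ is essential in the summand $0\oplus \mathbb{Z}/p^{3}\mathbb{Z}$; and each remaining simple $\langle (1,cp^{2})\rangle $ is itself a direct summand with complement $0\oplus \mathbb{Z}/p^{3}\mathbb{Z}$. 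So $M$ is weak $CS$ and, by your $(1)\Rightarrow (3)$, satisfies $(3)$. Yet $soc(M)\subseteq ^{ess}M$, so any decomposition as in $(2)$ forces $T=0$ and $E=M$, while $M$ is not $CS$: the submodule $C=\langle (1,p)\rangle \cong \mathbb{Z}/p^{2}\mathbb{Z}$ is closed (any essential extension of $C$ in $M$ has simple socle, hence is cyclic, hence equals $C$) but is not a direct summand, by Krull--Schmidt. So $(1)\Leftrightarrow (3)$ holds and your proofs of those directions are sound, but $(2)$ is not equivalent to them; the caution you expressed at the final step was fully warranted, as that step is not merely delicate but impossible.
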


\begin{proof}
$\left( 1\right) \Longrightarrow \left( 2\right) $. Let $M$ be a weak $CS$.
Then $soc\left( M\right) $ is essential in a summand, so $M=E$ $\oplus T$
with $soc(M)\subseteq ^{ess}E$. Now if $K$ is a submodule of $E$, then $%
soc(K)\subseteq ^{ess}L$ where $L$ is a summand of $M$ and $L\subseteq
^{ess}(K+L)$. But $L$ is closed, so $K\subseteq L$. Since $E\subseteq
^{ess}(L+E)$ and $E$ is closed in $M$, so $L\subseteq E$ and $E$ is $CS$.

$\left( 2\right) \Longrightarrow \left( 1\right) $. If $E$ is $CS$ and a
summand of $M$ with $soc(M)\subseteq ^{ess}E$, then every submodule of $%
soc\left( M\right) $ is a summand of $E$ and a summand of $M$.

$(1\Longrightarrow 3)$. Let $A$ be a submodule of $soc\left( M\right) $. By $%
(1)$, there exists $M_{1}\subseteq ^{\oplus }M$ such that $A\subseteq
^{ess}M_{1}$. Write $M=M_{1}\oplus M_{2}$ for some $M_{2}\subseteq M$. Since 
$M_{2}$ is a complement of $M_{1}$ in $M$ and $A$ is essential in $M_{1}$,
then $M_{2}$ is a complement of $A$ in $M$.

$(3\Longrightarrow 1)$. Let $A$ be a submodule of $soc\left( M\right) $. By $%
(2)$, there exists a decomposition $M=M_{1}\oplus M_{2}$ such that $%
A\subseteq M_{1}$ and $M_{2}$ is a complement of $A$ in $M$. Then $(A\oplus
M_{2})\subseteq ^{ess}M=M_{1}\oplus M_{2}$ and $A\subseteq M_{1}$ then $%
A\subseteq ^{ess}M_{1}$. Hence $M$ is weak $CS$ module.
\end{proof}

Recall that, a right $R$-module $M$ is $s$-$CS$ if every singular submodule
of $M$ is essential in a summand \cite{Zeyada}.

\begin{proposition}
\label{charac of s-cs}If $M$ is a right $R$-module, then the following
statements are equivalent:

\begin{enumerate}
\item $M$ is $s$-$CS$.

\item The second singular submodule $Z_{2}\left( M\right) $ is CS and a
summand of M.

\item For every singular submodule $A$ of $M$, there is a decomposition $%
M=M_{1}\oplus M_{2}$ such that $A\subseteq M_{1}$ and $M_{2}$ is a
complement of $A$ in $M$.
\end{enumerate}
\end{proposition}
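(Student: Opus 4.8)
The plan is to prove this proposition as the exact singular analogue of Lemma \ref{charac of weak}, with the second singular submodule $Z_{2}(M)$ playing the role that the essential-socle summand $E$ played there, and with ``singular'' everywhere replacing ``semisimple.'' Throughout I would use three standard facts about the Goldie torsion submodule: that $Z(M)\subseteq ^{ess}Z_{2}(M)$, that $Z_{2}(M)$ is closed in $M$ (equivalently $M/Z_{2}(M)$ is nonsingular), and that every singular submodule of $M$ is contained in $Z(M)$, hence in $Z_{2}(M)$.

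For $(1)\Longrightarrow (2)$, I would first apply the $s$-$CS$ hypothesis to the singular submodule $Z(M)$ itself, obtaining a decomposition $M=E\oplus T$ with $Z(M)\subseteq ^{ess}E$. The next step is to identify $E$ with $Z_{2}(M)$: since $Z(M)\subseteq ^{ess}E$ the quotient $E/Z(M)$ is singular, so $E\subseteq Z_{2}(M)$; conversely, writing $Z_{2}(M)=E\oplus (Z_{2}(M)\cap T)$ by the modular law (as $E\subseteq Z_{2}(M)$) and using $Z(M)\cap T\subseteq E\cap T=0$ together with $Z(M)\subseteq ^{ess}Z_{2}(M)$ forces $Z_{2}(M)\cap T=0$, whence $E=Z_{2}(M)$ is a summand of $M$. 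It then remains to show $Z_{2}(M)$ is $CS$, and for this I would take a closed submodule $K$ of $Z_{2}(M)$ (closed in $Z_{2}(M)$, hence closed in $M$), observe that $Z(K)=K\cap Z(M)\subseteq ^{ess}K$ because $Z(M)$ is essential in $Z_{2}(M)$, apply $s$-$CS$ to the singular submodule $Z(K)$ to produce a summand $P$ of $M$ with $Z(K)\subseteq ^{ess}P$, and note $P\subseteq Z_{2}(M)$ by the same singular-quotient argument used to place $E$.

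The hard part is closing the gap between the closure $K$ of $Z(K)$ and the summand $P$ delivered by the hypothesis. Since a singular submodule can sit essentially inside several \emph{distinct} closed submodules, one cannot simply assert $K=P$; the genuine content is that the closure $K$ is itself a direct summand. I expect to argue this through the projection $\pi\colon M\to P$ along a complement $Q$ with $M=P\oplus Q$: because $Z(K)\subseteq ^{ess}P$ with $Z(K)\subseteq K$ one gets $K\cap Q=0$, and then $Z(K)\oplus Q\subseteq ^{ess}M$ combined with the closedness of $K$ should force $K\oplus Q=M$, so that $K$ is a summand of $M$ lying inside the summand $Z_{2}(M)$, hence a summand of $Z_{2}(M)$. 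This ``transfer of summand-ness along a common essential singular submodule'' is the one place where the bookkeeping with essential extensions and closed submodules is delicate — it is exactly the analogue of the terse step ``$L\subseteq ^{ess}(K+L)$, $L$ closed, so $K\subseteq L$'' in the proof of Lemma \ref{charac of weak} — and it is where I would spend the most care.

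The remaining implications are routine. For $(2)\Longrightarrow (1)$, any singular submodule $A$ satisfies $A\subseteq Z(M)\subseteq Z_{2}(M)$; since $Z_{2}(M)$ is $CS$, $A$ is essential in a summand of $Z_{2}(M)$, which is a summand of $M$ because $Z_{2}(M)$ is a summand, giving $s$-$CS$. Finally $(1)\Longleftrightarrow (3)$ is obtained verbatim from the argument in Lemma \ref{charac of weak}: for $(1)\Rightarrow (3)$ I take the summand $M_{1}$ with $A\subseteq ^{ess}M_{1}$, write $M=M_{1}\oplus M_{2}$, and use that a direct complement of $M_{1}$ is a complement of the essential submodule $A$; for $(3)\Rightarrow (1)$ I use that $M_{2}$ being a complement of $A$ gives $A\oplus M_{2}\subseteq ^{ess}M$, and intersecting with $M_{1}$ via the modular law yields $A=(A\oplus M_{2})\cap M_{1}\subseteq ^{ess}M_{1}$, so that $M$ is $s$-$CS$.
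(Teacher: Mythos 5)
Your treatment of $(2)\Rightarrow(1)$ and of $(1)\Leftrightarrow(3)$ is correct, as is the identification $E=Z_{2}(M)$ at the start of $(1)\Rightarrow(2)$; note, for comparison, that the paper does none of this work itself, proving $(1)\Leftrightarrow(2)$ by citing \cite[Proposition 14]{Zeyada} and $(1)\Leftrightarrow(3)$ by analogy with Lemma \ref{charac of weak}, so you are attempting strictly more than the paper's proof. The genuine problem sits exactly at the step you flagged: from ``$K$ closed in $M$, $K\cap Q=0$, $Z(K)\oplus Q\subseteq ^{ess}M$, $M=P\oplus Q$, $Z(K)\subseteq ^{ess}P$, $Z(K)\subseteq ^{ess}K$'' you cannot conclude $K\oplus Q=M$. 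Your argument uses no property of $Z(K)$ beyond these formal ones, and as a formal inference this is false. Take $M=\mathbb{Z}/2\oplus \mathbb{Z}/8$ over $\mathbb{Z}$, with $A=\langle (0,4)\rangle$ in the role of $Z(K)$, $K=\langle (1,2)\rangle$, $P=0\oplus \mathbb{Z}/8$ and $Q=\langle (1,4)\rangle$. Then $M=P\oplus Q$, $A\subseteq ^{ess}K$, $A\subseteq ^{ess}P$, $K\cap Q=0$, $A\oplus Q=soc(M)\subseteq ^{ess}M$, and $K$ is closed in $M$ (the only submodules properly containing $K$ are $K+\langle (0,2)\rangle$ and $M$, both contain $(1,0)$, and $\langle (1,0)\rangle \cap K=0$, so neither is an essential extension of $K$); yet $K\oplus Q$ has order $8$, so $K\oplus Q\neq M$, and in fact $K$ is not a direct summand of $M$ at all.

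The structural point the plan misses is that a submodule can have many distinct closures in $M$: in the example, $K$ and $P$ are two different closures of $A$, and the fact that one closure ($P$) is a summand says nothing about another closure ($K$) being one --- equivalently, a closed complement of a direct summand ($K$ is a complement of $Q$ here) need not be a direct complement of it. So ``transfer of summand-ness along a common essential singular submodule'' is not a valid principle, and a correct proof of $(1)\Rightarrow(2)$ must use more about the singular torsion theory than your outline does. You should also know that the step of Lemma \ref{charac of weak} you cite as your model, namely ``$soc(K)\subseteq ^{ess}L$ with $L$ a summand, hence $L\subseteq ^{ess}(K+L)$ and so $K\subseteq L$,'' fails for the same reason: in $\mathbb{Z}/p^{2}\oplus \mathbb{Z}/p^{2}$ take $K=\langle (1,1)\rangle$ and $L=\langle (1,1+p)\rangle$; then $soc(K)=\langle (p,p)\rangle \subseteq ^{ess}L$ and $L$ is a summand, but $(p,0)\in K+L$ while $\langle (p,0)\rangle \cap L=0$, so $L$ is not essential in $K+L$ and $K\not\subseteq L$. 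Thus neither your step nor its model in the paper is sound as written; the safe route for $(1)\Leftrightarrow(2)$ is the one the paper actually takes, namely invoking \cite[Proposition 14]{Zeyada}.
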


\begin{proof}
$\left( 1\right) \Longleftrightarrow \left( 2\right) $. \cite[Proposition 14]%
{Zeyada}.

$\left( 1\right) \Longleftrightarrow \left( 3\right) $. Similar argument of
the proof of the above Lemma.
\end{proof}

Given a right $R$-module $M$ we will denote by $\Omega (M)$ [respectively $%
C(M)$] a set of representatives of the isomorphism classes of the simple
quotient modules (respectively simple submodules) of $M$. In particular,
when $M=R_{R}$, then $\Omega (R)$ is a set of representatives of the
isomorphism classes of simple right $R$-modules.\bigskip

\begin{lemma}
\label{P is f. cogen}Let $R$ be a ring, and let $P_{R}$ be a finitely
generated quasi-projective $CS$-module, such that $|\Omega (P)|\leq |C(P)|$.
Then $|\Omega (P)|=|C(P)|$, and $P_{R}$ has finitely generated essential
socle.
\end{lemma}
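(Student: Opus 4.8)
The plan is to establish the reverse inequality $|C(P)|\le |\Omega(P)|$ by exhibiting an injection from the isomorphism classes of simple submodules into those of simple quotients; combined with the standing hypothesis $|\Omega(P)|\le |C(P)|$ this yields the asserted equality, and the construction will be arranged so that finite generation of the socle falls out of it. We may assume $P\neq 0$. Since $P$ is finitely generated it has a maximal submodule, so $\Omega(P)\neq \emptyset$, and the hypothesis then forces $C(P)\neq \emptyset$, i.e.\ $\mathrm{soc}(P)\neq 0$.

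First I would use the $CS$ hypothesis to attach to each class $[S]\in C(P)$ a uniform direct summand. Choosing a simple submodule $S$ of type $[S]$, the $C1$-condition gives $S\subseteq^{ess}U$ with $U\subseteq^{\oplus}P$; as $S$ is simple and essential, $U$ is uniform with $\mathrm{soc}(U)=S$. Being a direct summand of the finitely generated quasi-projective module $P$, the module $U$ is again finitely generated and quasi-projective, hence has a maximal submodule and a simple quotient. The structural point to settle here is that such a $U$ is \emph{local}, so that $T_{[S]}:=U/\mathrm{rad}(U)$ is the unique simple quotient of $U$; this is exactly where quasi-projectivity of $U$ together with uniformity (an essential simple socle) is used. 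I would then set $\Phi([S])=[T_{[S]}]\in \Omega(P)$ and check independence of the choices made.

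The heart of the argument is the injectivity of $\Phi$. Suppose $T_{[S]}\cong T_{[S']}$ for uniform summands $U,U'$ of $P$. Using quasi-projectivity of $P$, an isomorphism of the tops lifts to a homomorphism $U\to U'$ (equivalently an element of $\mathrm{End}(P)$ carrying one summand into the other); because both modules are local with essential socle, such a lift is forced to be an isomorphism, whence $\mathrm{soc}(U)\cong \mathrm{soc}(U')$ and $[S]=[S']$. This gives $|C(P)|\le |\Omega(P)|$ and therefore $|\Omega(P)|=|C(P)|$.

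Finally, for finite generation of $\mathrm{soc}(P)$ I would write $P=P_{1}\oplus P_{2}$ with $\mathrm{soc}(P)\subseteq^{ess}P_{1}$ (again by $CS$), so that $\mathrm{soc}(P_{2})=0$; the cardinal equality just obtained must then be used both to rule out extra simple quotients arising from $P_{2}$ and to bound the multiplicities inside each homogeneous component of $\mathrm{soc}(P)$, forcing $\mathrm{soc}(P)$ to be a finite direct sum of simple modules essential in $P$. I expect this last step---extracting genuine finite generation, and not merely finiteness of the number of isomorphism types, from the bijection $\Phi$ and the finite generation of $P$---to be the main obstacle, since it is precisely here that the hypothesis $|\Omega(P)|\le |C(P)|$ must be exploited in full force rather than merely to count isomorphism classes.
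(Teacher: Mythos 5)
There is a genuine gap, and in fact two. First, note that the paper gives no proof of this lemma at all: it simply cites \cite[Lemma 7.28]{Nich&You} (a result due to G\'{o}mez Pardo and Guil Asensio \cite{Gomez&Asensio}), so your argument has to stand on its own. It does not, because its central structural claim --- that a uniform, finitely generated, quasi-projective summand $U$ with essential simple socle is \emph{local}, so that $U/\mathrm{rad}(U)$ is its unique simple quotient --- is false. The paper itself records the counterexample (the Example following Proposition \ref{weak+Kasch}): over the triangular ring $R=\left\{ \left( \begin{smallmatrix} a & 0 \\ b & c \end{smallmatrix}\right) :a\in Z,\ b,c\in Q\right\} $, the module $P_{1}=Re_{11}$ is indecomposable (uniform), cyclic and projective with simple essential socle, yet $\mathrm{End}(P_{1})\cong Z$ is not local; concretely $\mathrm{rad}(P_{1})=\mathrm{soc}(P_{1})$ and $P_{1}/\mathrm{rad}(P_{1})\cong Z$, so $P_{1}$ has infinitely many maximal submodules and infinitely many pairwise non-isomorphic simple quotients $Z/pZ$. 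Thus your map $\Phi([S])=[U/\mathrm{rad}(U)]$ is not well defined, and the injectivity step (``both modules are local with essential socle, such a lift is forced to be an isomorphism'') collapses with it. Quasi-projectivity plus uniformity simply does not yield locality --- this is exactly the error the authors are correcting in \cite[Proposition 13]{Zeyada} by adding the $GC2$ hypothesis in Propositions \ref{weak+Kasch} and \ref{s+Kasch}.

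Second, the step you yourself flag as the ``main obstacle'' --- finite generation of $\mathrm{soc}(P)$ --- is never actually argued, and it cannot be extracted from a bijection between isomorphism classes: equality of (possibly infinite) cardinals $|\Omega(P)|=|C(P)|$ says nothing about multiplicities within homogeneous components. For perspective, if $R=\prod_{i\in I}F_{i}$ is an infinite product of fields, then $R_{R}$ is cyclic, projective and $CS$ (even injective), $|C(R)|=|I|$ is infinite, and $\mathrm{soc}(R_{R})=\oplus _{i}F_{i}$ is not finitely generated; this ring escapes the lemma only because it has far more simple quotients than simple submodules ($2^{2^{|I|}}$ maximal ideals), which shows that the hypothesis $|\Omega (P)|\leq |C(P)|$ must be exploited quantitatively, inside a genuine counting argument, and not merely ``combined'' at the end with a reverse inequality. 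That is precisely what the actual proof in \cite{Nich&You} does: it uses $|\Omega (P)|\leq |C(P)|$ together with infinite-cardinal counting to force the socle to be finitely generated and essential. So both halves of the conclusion --- the equality and the finite generation --- are unproven in your proposal.
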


\begin{proof}
See \cite[Lemma 7.28]{Nich&You}.
\end{proof}

\begin{proposition}
Let $R$ be a ring. Then $R$ is a right $PF$-ring if and only if $R_{R}$ is a
cogenerator and $R$ is weak $CS$.
\end{proposition}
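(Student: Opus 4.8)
Throughout, I use that $R$ is right $PF$ exactly when $R_{R}$ is an injective cogenerator. The forward implication is then immediate: if $R$ is right $PF$ then $R_{R}$ is in particular a cogenerator, and since an injective module is $CS$ (indeed quasi-continuous) and every $CS$ module is weak $CS$, the ring $R$ is weak $CS$. So the whole content lies in the converse, and the plan is to upgrade the hypotheses ``cogenerator $+$ weak $CS$'' to ``injective cogenerator.''

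For the converse I would first extract the socle structure. Because $R_{R}$ is a cogenerator, every simple right module is cogenerated by $R_{R}$, hence embeds in $R_{R}$; thus $R$ is right Kasch and $C(R)=\Omega (R)$. Moreover, for each simple $S$ the uniform injective hull $E(S)$ embeds in $R_{R}$: realize $E(S)\hookrightarrow R_{R}^{\,I}$ by the cogenerator property and compose with a coordinate projection that is nonzero on the essential socle $S$, which is then injective on all of $E(S)$. Next, weak $CS$ and Lemma \ref{charac of weak} give $R_{R}=E\oplus T$ with $E$ a $CS$ module and $soc(R_{R})\subseteq ^{ess}E$; since $E$ is a direct summand of $R_{R}$ it is finitely generated, projective, hence quasi-projective. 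As $soc(R)=soc(E)$ contains a copy of every simple, $C(E)=\Omega (R)\supseteq \Omega (E)$, so $|\Omega (E)|\leq |C(E)|$ and Lemma \ref{P is f. cogen} applies to $P=E$: the socle $soc(R)=soc(E)$ is finitely generated and essential in $E$, and there are only finitely many isomorphism classes $S_{1},\dots ,S_{n}$ of simple right $R$-modules.

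It remains to prove that $R_{R}$ is injective; then $R_{R}$ is an injective cogenerator and $R$ is right $PF$. The plan is to show first that $T=0$, i.e. $soc(R_{R})\subseteq ^{ess}R_{R}$, and then that $R_{R}=E$ coincides with $E(R_{R})=E(S_{1})\oplus \cdots \oplus E(S_{n})$. Once $T=0$, the module $R_{R}$ is $CS$ with finitely generated essential socle, hence a finite direct sum of uniform summands $U_{1}\oplus \cdots \oplus U_{n}$ with each $soc(U_{i})$ simple; each hull $E(soc(U_{i}))=E(S_{j})$ embeds in $R_{R}$ by the step above, and being injective its image is a direct summand, which one compares with the uniform decomposition to force each $U_{i}$ to be its own injective hull. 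I expect the genuine obstacle to lie precisely here: neither ``cogenerator'' nor ``weak $CS$'' in isolation rules out a summand $T$ of zero socle, so killing $T$ (equivalently, promoting the essentiality $soc(R)\subseteq ^{ess}E$ to $soc(R)\subseteq ^{ess}R_{R}$) and supplying the $C2$-type step that identifies each uniform component with $E(S_{j})$ must exploit the cogenerator hypothesis through the ring structure---via the embeddings $E(S_{j})\hookrightarrow R_{R}$ as direct summands---rather than through purely module-theoretic formalities.
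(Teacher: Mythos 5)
Your forward implication is fine, and your setup for the converse is correct and is in fact the same route the paper takes: cogenerator implies Kasch; the $R^{I}$-plus-coordinate-projection trick embeds each injective hull $E(S)$ of a simple into $R_{R}$; Lemma \ref{charac of weak} gives $R=E\oplus T$ with $E$ a $CS$ summand and $soc(R_{R})\subseteq ^{ess}E$; and Lemma \ref{P is f. cogen}, applied to the finitely generated projective module $E$, gives a finitely generated essential socle and finitely many isomorphism classes of simples. But at that point you write ``it remains to prove that $R_{R}$ is injective,'' offer a plan, and end by conceding that the key step --- showing the zero-socle summand $T$ vanishes --- is ``the genuine obstacle,'' which you do not resolve. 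That step is not a loose end; it is the theorem. Since a right $PF$ ring is semiperfect with essential right socle (Proposition \ref{PF rings}), any ring decomposing as $R=Q\oplus T$, where $Q_{R}$ is injective with essential socle containing a copy of every simple module and $0\neq T$ has $soc(T)=0$, would be a right cogenerator (it contains the cogenerator $Q$) and weak $CS$ (by Lemma \ref{charac of weak}(2)), yet not right $PF$. So the proposition stands or falls exactly with excluding such decompositions, and everything you establish before the admitted obstacle is compatible with $T\neq 0$; hence, as submitted, your proposal does not prove the converse.

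For comparison, the paper's own proof runs precisely along your lines --- $R=E\oplus T$ with $S_{r}\subseteq ^{ess}E$, Lemma \ref{P is f. cogen} to get $S_{r}=S_{1}\oplus \cdots \oplus S_{m}$ finitely generated and essential in $E$, and the same projection trick to produce monomorphisms $I_{i}=E(S_{i})\longrightarrow R$ --- and then concludes ``$R=E_{1}\oplus \cdots \oplus E_{m}\oplus T$ where $soc(T)=0$, so $R$ is a right $PF$-ring.'' Note that this last implication is asserted, not derived: the paper gives no argument for why $T=0$, i.e., why $S_{r}$ is essential in all of $R_{R}$ rather than merely in the summand $E_{1}\oplus \cdots \oplus E_{m}$, which is exactly what $PF$ demands. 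So your diagnosis of where the difficulty sits is accurate, and it is the very point the paper passes over in silence; but identifying the obstacle is not the same as overcoming it, and a complete proof would have to supply the missing argument (for instance via the Kasch-plus-$CS$ machinery of G\'{o}mez Pardo and Guil Asensio), which neither you nor the paper's two-line conclusion provides.
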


\begin{proof}
Every right $PF$-ring is right self-injective and is a right cogenerator by 
\cite[Theorem 1.56]{Nich&You}.\ Conversely, if $R$ is weak $CS$ and $R$ is
cogenerator then $R=E$ $\oplus T$ where $E$ is $CS$ with $S_{r}\subseteq
^{ess}E$. By the above Lemma, $E$ has a finitely generated, essential right
socle. Since $E$ is right finite dimensional and $R_{R}$ is a cogenerator,
let $S_{r}=S_{1}\oplus S_{2}\oplus .....\oplus S_{m}$ and $I_{i}=I(S_{i})$
be the injective hull of $S_{i}$, then there exists an embedding $\sigma
:I_{i}\longrightarrow R^{I}$ for some set $I$. Then $\pi \circ \sigma \neq 0$
for some projection $\pi :R^{I}\longrightarrow R$, so $(\pi \circ \sigma
)|S_{i}\neq 0$ and hence is monic. Thus $\pi \circ \sigma
:I_{i}\longrightarrow R$ is monic, and so $R$ $=E_{1}\oplus $\thinspace $%
....\oplus E_{m}\oplus T$ where $soc\left( T\right) =0$. So $R$ is a right $%
PF$-ring.
\end{proof}

\begin{proposition}
\cite[Proposition 16]{Zeyada} Let $R$ be a ring. Then $R$ is a right $PF$%
-ring if and only if $R_{R}$ is a cogenerator and $\left( Z_{r}^{2}\right)
_{R}$ is $CS$.
\end{proposition}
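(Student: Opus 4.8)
The plan is to prove the two implications separately: the "only if" direction will be essentially immediate from standard facts together with Proposition \ref{charac of s-cs}, while the "if" direction I would model closely on the proof of the preceding Proposition, with $(Z_r^2)_R=Z_2(R_R)$ playing the role that the weak $CS$ summand $E$ played there.

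For the forward direction I would note that a right $PF$-ring is right self-injective and a right cogenerator by \cite[Theorem 1.56]{Nich&You}, which supplies the cogenerator half at once. Since $R_R$ is then injective it is in particular $CS$, hence $s$-$CS$, so Proposition \ref{charac of s-cs} shows that $Z_2(R_R)=(Z_r^2)_R$ is $CS$ (indeed a $CS$ direct summand). This settles "only if".

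For the converse, assume $R_R$ is a cogenerator and $(Z_r^2)_R$ is $CS$. First I would invoke Proposition \ref{charac of s-cs} to write $R_R=Z_2(R_R)\oplus W$ with $Z_2(R_R)$ a $CS$ summand and $W\cong R/Z_2(R_R)$ nonsingular. The cogenerator hypothesis forces every simple right module to embed in $R_R$; a singular simple necessarily lands inside the Goldie-torsion submodule $Z_2(R_R)$, while every simple quotient of $Z_2(R_R)$ is again Goldie-torsion, hence singular. Thus both $\Omega(Z_2(R_R))$ and $C(Z_2(R_R))$ are contained in the set of isomorphism classes of singular simple modules and in fact $C(Z_2(R_R))$ equals it, so $|\Omega(Z_2(R_R))|\leq|C(Z_2(R_R))|$. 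Since $Z_2(R_R)$ is a summand of $R_R$ it is finitely generated and quasi-projective, so Lemma \ref{P is f. cogen} applies and yields that $Z_2(R_R)$ has finitely generated essential socle. Writing this singular socle as a finite direct sum of simples and embedding each of their injective hulls into $R_R$ through a projection of a suitable power of $R_R$, exactly as in the preceding Proposition, I obtain each such injective hull as a direct summand of $R_R$.

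The remaining, and in my view hardest, step is to control the nonsingular part $W$. Unlike the weak $CS$ hypothesis, which forces all of $soc(R)$ to be essential in a single $CS$ summand and hence finitely generated, the $s$-$CS$ hypothesis only governs the singular socle; the nonsingular simple modules live in $soc(W)$ and are untouched by Lemma \ref{P is f. cogen}. Using the cogenerator property I can still embed each nonsingular simple into $R_R$, and since such a simple meets the Goldie-torsion summand $Z_2(R_R)$ trivially it embeds into $W$, whose injective hull then splits off as a direct summand. The genuine obstacle is to show that only finitely many such types occur and that $soc(W)$ is essential in $W$, i.e.\ that $W$ reduces to a finite direct sum of injective hulls of simples. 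This cannot come from cyclicity of $R_R$ alone (an infinite product of fields is a cyclic self-injective ring with infinitely many orthogonal idempotents), so the argument must lean here on the full strength of the cogenerator hypothesis, which is precisely the nontrivial input ensuring a right cogenerator ring is right $PF$. Once $W$ is resolved this way, $R_R$ is a finite direct sum of injective hulls of simple modules, hence injective with finitely generated essential socle; being also a cogenerator, $R$ is right $PF$.
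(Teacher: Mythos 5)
The paper offers no proof of this proposition at all: it is imported verbatim from \cite[Proposition 16]{Zeyada}, so there is no internal argument to compare yours against, and your attempt must stand on its own. It does not, for two concrete reasons, one of which you acknowledge yourself.

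First, your opening move in the converse misapplies Proposition \ref{charac of s-cs}. The hypothesis of the statement is only that $\left( Z_{r}^{2}\right) _{R}$ is $CS$; the cited equivalence says that $R$ is $s$-$CS$ if and only if $Z_{2}(R_{R})$ is $CS$ \emph{and} a direct summand of $R_{R}$. You silently upgrade ``$Z_{2}(R_{R})$ is $CS$'' to ``$R$ is $s$-$CS$'' in order to write $R_{R}=Z_{2}(R_{R})\oplus W$, and every subsequent step (finite generation and quasi-projectivity of $Z_{2}(R_{R})$, hence the applicability of Lemma \ref{P is f. cogen}) rests on that unjustified splitting. Second, and fatally, you never handle the part you correctly identify as hardest: showing that only finitely many nonsingular simple modules occur and that $soc(W)\subseteq ^{ess}W$. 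Your proposal defers this to ``the full strength of the cogenerator hypothesis, which is precisely the nontrivial input ensuring a right cogenerator ring is right $PF$.'' But no such theorem exists to lean on: a right cogenerator ring need not be right $PF$, and if ``$R_{R}$ cogenerator $\Rightarrow$ right $PF$'' were available, then the $CS$ hypothesis on $Z_{r}^{2}$ here --- and the weak $CS$ hypothesis in the preceding proposition --- would be redundant and both results vacuous. The entire content of the proposition is that the cogenerator property \emph{together with} the $CS$ condition forces $PF$; the step you leave open is therefore not a technical detail but the statement itself, and invoking the conclusion to close it makes the argument circular. The forward direction of your proposal is fine; the converse is a plan with its central step missing.
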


\begin{proposition}
The following statements are equivalent:

\begin{enumerate}
\item Every right $R$-module is$\ $weak $CS$.

\item Every right $R$-module with essential socle is $CS$.

\item For every right $R$-module $M$, $M=E\oplus K$ where $E$ is $CS$ with $%
soc\left( M\right) \subseteq ^{ess}E$.
\end{enumerate}
\end{proposition}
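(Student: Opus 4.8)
The two implications $(1)\Leftrightarrow(3)$ and $(1)\Rightarrow(2)$ are essentially formal, so I would dispose of them first. For $(1)\Leftrightarrow(3)$, observe that Lemma \ref{charac of weak} already establishes, for a fixed module, that being weak $CS$ is equivalent to admitting a decomposition $M=E\oplus T$ with $E$ being $CS$ and $soc(M)\subseteq^{ess}E$; quantifying this equivalence over all right $R$-modules gives $(1)\Leftrightarrow(3)$ at once. For $(1)\Rightarrow(2)$, let $M$ have essential socle. By $(1)$ and Lemma \ref{charac of weak}, $M=E\oplus T$ with $E$ being $CS$ and $soc(M)\subseteq^{ess}E$. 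Since $soc(M)\subseteq E$ we get $T\cap soc(M)=0$, and as $soc(M)\subseteq^{ess}M$ this forces $T=0$; hence $M=E$ is $CS$.

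The whole weight of the proposition therefore lies in $(2)\Rightarrow(1)$, which I would prove in the equivalent form $(2)\Rightarrow(3)$. Fix an arbitrary right $R$-module $M$ and pass to its injective hull. Choosing a complement $C$ of $soc(M)$ in $M$, we have $soc(M)\oplus C\subseteq^{ess}M$ and $soc(C)=0$, so $E(M)=E(soc(M))\oplus E(C)$ with $soc(E(C))=0$. Put $E=M\cap E(soc(M))$ and $K=M\cap E(C)$. A short computation shows $soc(E)=soc(M)\subseteq^{ess}E$ and $soc(K)=0$, so $E$ has essential socle and is therefore $CS$ by hypothesis $(2)$, while $K$ is socle-free. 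Because $E(soc(M))$ and $E(C)$ are injective they are closed in $E(M)$, so $E$ and $K$ are closed in $M$; moreover $M\subseteq^{ess}E(M)$ forces $E\subseteq^{ess}E(soc(M))$ and $K\subseteq^{ess}E(C)$, whence $E\oplus K\subseteq^{ess}M$ and $E,K$ are mutual complements in $M$.

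At this point statement $(3)$ would follow if $M=E\oplus K$, and proving this equality is the main obstacle: one must split off the socle-free complement $K$. The natural route is to note that the projection $E(M)\to E(soc(M))$ fixes $soc(M)$ and carries $M$ onto a module $\overline{E}\cong M/K$ with $soc(M)\subseteq\overline{E}\subseteq E(soc(M))$, so $soc(\overline{E})=soc(M)\subseteq^{ess}\overline{E}$; thus $M/K$ has essential socle and is again $CS$ by $(2)$, and it remains to show that the exact sequence $0\to K\to M\to M/K\to 0$ splits. This splitting is exactly where hypothesis $(2)$ must be exploited in an essential way, and I expect it to be the hardest step: the leverage I would use is the $CS$ property of the essential-socle quotient $M/K$ together with the socle-freeness of $K$, aiming to produce a retraction $M\to K$. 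Once the sequence splits we obtain $M=K\oplus L$ with $L\cong M/K$ being $CS$ and $soc(M)=soc(L)\subseteq^{ess}L$, which is precisely statement $(3)$; combined with $(1)\Leftrightarrow(3)$ and $(1)\Rightarrow(2)$ this closes the chain of equivalences.
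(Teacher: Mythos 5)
Your handling of $(1)\Leftrightarrow(3)$ and $(1)\Rightarrow(2)$ via Lemma \ref{charac of weak} is correct, and so is the setup you build for $(2)\Rightarrow(3)$: with $C$ a complement of $soc(M)$ in $M$, the submodules $E=M\cap E(soc(M))$ and $K=M\cap E(C)$ are indeed closed mutual complements, $E\oplus K\subseteq ^{ess}M$, $soc(K)=0$, and $M/K\cong \overline{E}$ has essential socle, so $E$ and $M/K$ are $CS$ under $(2)$. But your argument stops at exactly the point where the proposition has any content: you do not prove that $0\rightarrow K\rightarrow M\rightarrow M/K\rightarrow 0$ splits; you only announce it as the step you ``expect'' to be hardest and describe the leverage you would ``aim'' to use. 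Since $(3)\Rightarrow(1)$ and $(1)\Rightarrow(2)$ are formal consequences of Lemma \ref{charac of weak}, this splitting \emph{is} the proposition, so what you have is a reduction, not a proof. (For comparison, the paper states this proposition with no proof at all, so there is no argument there against which to check the missing step.)

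The gap is genuine in a strong sense: the facts you have assembled cannot, by themselves, imply the splitting, because they are all consistent with its failure. Take $R=\mathbb{Z}$ and $M=\prod_{p}\mathbb{Z}/p\mathbb{Z}$, the product over all primes. Here $soc(M)=\bigoplus_{p}\mathbb{Z}/p\mathbb{Z}$ is the torsion subgroup $t(M)$ of $M$; your construction yields $E=soc(M)$, which is semisimple and hence $CS$ with essential socle, $K$ a closed torsion-free (equivalently, socle-free) complement, and $M/K\cong \overline{E}$ is a torsion group lying in $\bigoplus_{p}\mathbb{Z}(p^{\infty })$, whose primary components are uniform, hence also $CS$ with essential socle. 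Every ingredient you established is therefore present, yet no retraction $M\rightarrow K$ exists: such a retraction would give $M=K\oplus L$ with $L\cong M/K$ torsion, whence $L=t(M)$, making the torsion subgroup a direct summand of $\prod_{p}\mathbb{Z}/p\mathbb{Z}$; this is impossible because that group is reduced while $M/t(M)$ is nonzero and divisible. Of course $\mathbb{Z}$ does not satisfy hypothesis $(2)$ (for instance $\mathbb{Z}/p\mathbb{Z}\oplus \mathbb{Z}/p^{3}\mathbb{Z}$ has essential socle and is not $CS$), so this is not a counterexample to the proposition; it shows instead that the leverage you name --- $CS$-ness of $M/K$ together with socle-freeness of $K$ --- is provably insufficient, and that any correct proof of $(2)\Rightarrow(1)$ must apply hypothesis $(2)$ to further, suitably constructed modules with essential socle in order to manufacture the missing complement. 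That idea is what is absent from the proposal.
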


\begin{proposition}
The following statements are equivalent:

\begin{enumerate}
\item Every right $R$-module is $\ s$-$CS$.

\item Every \ Goldie torsion right $R$-module is $CS$.

\item For every right $R$-module $M$, $M=Z_{2}\left( M\right) \oplus K$
where $Z_{2}\left( M\right) $ is $CS$.
\end{enumerate}
\end{proposition}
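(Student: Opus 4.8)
The plan is to prove the three-way equivalence by establishing the cycle $(1)\Longrightarrow(2)\Longrightarrow(3)\Longrightarrow(1)$, mirroring the structure of the preceding proposition on weak $CS$ modules but with the Goldie torsion submodule $Z_{2}(M)$ playing the role that $\operatorname{soc}(M)$ played there. The key conceptual input is the characterization already available in Proposition \ref{charac of s-cs}: a module $M$ is $s$-$CS$ precisely when $Z_{2}(M)$ is $CS$ and a direct summand of $M$. So I would keep that equivalence in the foreground throughout, using it to translate statements about the $s$-$CS$ property into statements about $Z_{2}(M)$.

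First I would show $(1)\Longrightarrow(2)$. Suppose every right $R$-module is $s$-$CS$, and let $M$ be a Goldie torsion module, meaning $M=Z_{2}(M)$. Since $M$ is $s$-$CS$, Proposition \ref{charac of s-cs} gives that $Z_{2}(M)$ is $CS$ and a summand; but $Z_{2}(M)=M$, so $M$ itself is $CS$. Next, for $(2)\Longrightarrow(3)$, I would take an arbitrary right $R$-module $M$ and observe that $Z_{2}(M)$ is always a Goldie torsion module (this is the standard fact that $Z_{2}$ applied to $Z_{2}(M)$ returns $Z_{2}(M)$), so by $(2)$ it is $CS$. The remaining content of $(3)$ is that $Z_{2}(M)$ splits off as a direct summand of $M$; I would obtain this by noting that $M/Z_{2}(M)$ is nonsingular, so $Z_{2}(M)$ is closed in $M$, and combine this with the fact that $Z_{2}(M)$, being $CS$ and Goldie torsion, is injective relative to the nonsingular quotient — yielding the splitting $M=Z_{2}(M)\oplus K$ for some $K$.

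Finally, for $(3)\Longrightarrow(1)$, I would take any module $M$ and any singular submodule $A$ of $M$. By $(3)$, write $M=Z_{2}(M)\oplus K$ with $Z_{2}(M)$ being $CS$. Since $A$ is singular, $A\subseteq Z_{2}(M)$, and as $Z_{2}(M)$ is $CS$ there is a summand $N$ of $Z_{2}(M)$ with $A\subseteq^{ess}N$; because $Z_{2}(M)$ is itself a summand of $M$, $N$ is a summand of $M$ as well, so $A$ is essential in a summand of $M$ and $M$ is $s$-$CS$. Applying this to every $M$ gives $(1)$.

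The step I expect to be the main obstacle is the splitting in $(2)\Longrightarrow(3)$: knowing $Z_{2}(M)$ is $CS$ does not by itself force it to be a direct summand, and the cleanest route is to use that $Z_{2}(M)$ is closed in $M$ together with the nonsingularity of $M/Z_{2}(M)$ to conclude that the closed $CS$ submodule splits off. I would want to verify carefully that "$CS$ plus Goldie torsion" supplies exactly the relative injectivity needed against the nonsingular quotient $M/Z_{2}(M)$, since this is where the hypothesis that \emph{every} module is covered in the equivalence is genuinely used, rather than just a single fixed $M$.
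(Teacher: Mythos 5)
Your cycle $(1)\Rightarrow(2)\Rightarrow(3)\Rightarrow(1)$ is the natural structure, and two of the three legs are fine: $(1)\Rightarrow(2)$ and $(3)\Rightarrow(1)$ are correct, and in fact $(1)\Leftrightarrow(3)$ is nothing more than Proposition \ref{charac of s-cs} quantified over all modules. (For what it is worth, the paper states this proposition with no proof at all, so your attempt has to stand entirely on its own.) The genuine gap is exactly where you predicted it: the splitting step in $(2)\Rightarrow(3)$. The fact you invoke --- that a module which is CS and Goldie torsion is injective relative to every nonsingular module --- is false. Over $R=\mathbb{Z}$, the module $\mathbb{Z}/p\mathbb{Z}$ is singular (hence Goldie torsion) and simple (hence CS), and $\mathbb{Z}$ is nonsingular, yet the homomorphism $p\mathbb{Z}\rightarrow \mathbb{Z}/p\mathbb{Z}$ given by $pn\mapsto \bar{n}$ does not extend to $\mathbb{Z}$; so $\mathbb{Z}/p\mathbb{Z}$ is not $\mathbb{Z}$-injective. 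Worse, even if one had injectivity of $Z_{2}(M)$ relative to the quotient $M/Z_{2}(M)$, that is not the kind of relative injectivity that forces splitting: the standard splitting criterion requires $A$ to be $M$-injective, not $M/A$-injective, and relative injectivity does not pass from $B$ and $X/B$ to an extension $X$ (e.g. $\mathbb{Z}/p\mathbb{Z}$ is $\mathbb{Z}/p\mathbb{Z}$-injective but not $\mathbb{Z}/p^{2}\mathbb{Z}$-injective). Closedness does not repair this either: in $M=\mathbb{Z}\oplus \mathbb{Z}/2\mathbb{Z}$ the submodule $A=\mathbb{Z}\cdot(2,\bar{1})$ is closed, $M/A\cong \mathbb{Z}/4\mathbb{Z}$, and $A\cong \mathbb{Z}$ is vacuously $M/A$-injective (every homomorphism from a submodule of $M/A$ into the torsion-free module $A$ is zero), yet $A$ is not a direct summand of $M$, since $M$ has no element of order $4$.

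The underlying problem is structural. Hypothesis $(2)$ is a hypothesis about the whole class of Goldie torsion modules, but your argument uses it only once, to conclude that the single module $Z_{2}(M)$ is CS, and then tries to extract the splitting from intrinsic properties of $Z_{2}(M)$ together with its closedness in $M$. That cannot work: being CS is an intrinsic property of a module, while being a direct summand depends on the embedding (the closed submodule $\mathbb{Z}\cdot(2,\bar{1})$ above is uniform, hence CS, and still not a summand), and the summand condition is precisely what statement $(3)$ adds on top of statement $(2)$. A correct proof of $(2)\Rightarrow(3)$ would have to deploy $(2)$ on auxiliary Goldie torsion modules manufactured from $M$ --- for instance $Z_{2}(M)\oplus E(Z_{2}(M))$, or the images of $M$ in $E(Z_{2}(M))$ under extensions of $\mathrm{id}_{Z_{2}(M)}$ (note that $E(Z_{2}(M))$ is itself Goldie torsion, since the Goldie torsion class is closed under essential extensions) --- with the aim of producing a homomorphism $M\rightarrow Z_{2}(M)$ restricting to the identity on $Z_{2}(M)$; that map, and nothing weaker, is what the splitting requires. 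As it stands, your argument establishes $(1)\Leftrightarrow(3)$ and $(1)\Rightarrow(2)$, but the only substantive implication of the proposition, $(2)\Rightarrow(3)$, remains unproved, and since the paper supplies no proof of its own, there is no intended argument to fall back on here.
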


Dinh Van huynh, S. K. Jain and S. R. L\'{o}pez-Permouth \cite{H&J&L.} proved
that if $R$ is simple such that every cyclic singular right $R$-module is $%
CS $, then $R$ is right noetherian.

\begin{corollary}
If \ $R$ is simple such that every cyclic right $R$-module is $s$-$CS$, then 
$R$ is right noetherian.
\end{corollary}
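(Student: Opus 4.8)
The plan is to reduce the stated hypothesis to the one appearing in the theorem of Huynh, Jain and L\'{o}pez-Permouth quoted just above the corollary, namely that $R$ is simple and every cyclic singular right $R$-module is $CS$. Since our hypothesis already asserts that \emph{every} cyclic right $R$-module is $s$-$CS$, it suffices to show that a cyclic singular module which is $s$-$CS$ is in fact $CS$; the conclusion then follows by invoking their result verbatim.

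The key step is the observation that, for a singular module, the $s$-$CS$ property coincides with the $CS$ property. Recall that for any submodule $N$ of a module $M$ one has $Z(N)=N\cap Z(M)$, because membership of an element in the singular submodule is determined solely by the essentiality of its annihilator in $R$. Hence if $M$ is singular, i.e. $Z(M)=M$, then every submodule $N$ of $M$ satisfies $Z(N)=N$, so every submodule of $M$ is singular. Consequently the defining condition of $s$-$CS$ (every singular submodule is essential in a summand) collapses to the condition that \emph{every} submodule of $M$ is essential in a summand, which is precisely the $CS$-condition.

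Putting these together: let $M$ be an arbitrary cyclic singular right $R$-module. Being cyclic, $M$ is $s$-$CS$ by hypothesis; being singular, it is therefore $CS$ by the preceding paragraph. Thus every cyclic singular right $R$-module is $CS$, and the Huynh--Jain--L\'{o}pez-Permouth theorem then gives that $R$ is right noetherian.

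I expect no serious obstacle here, since the argument is essentially a matter of aligning definitions. The only point requiring genuine care is the identity $Z(N)=N\cap Z(M)$ for submodules, as this is exactly what guarantees that the $s$-$CS$ hypothesis, once restricted to the class of singular modules, upgrades to the full $CS$-condition rather than to something strictly weaker. Everything else is the straightforward passage from the larger class of cyclic modules to its subclass of cyclic singular modules.
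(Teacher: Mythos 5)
Your proof is correct and takes exactly the route the paper intends: the corollary is stated immediately after the Huynh--Jain--L\'{o}pez-Permouth result precisely because, for a singular module, every submodule is singular, so the $s$-$CS$ condition collapses to $CS$ and the hypothesis specializes to theirs. Your explicit check of $Z(N)=N\cap Z(M)$ merely fills in the one step the paper leaves unstated.
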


\begin{proposition}
\label{weak+Kasch}If $R$ is a weak $CS$ and $GC2$, and right Kasch, then $R$
is semiperfect.
\end{proposition}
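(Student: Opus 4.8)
The plan is to combine the weak $CS$ decomposition from Lemma~\ref{charac of weak} with the finiteness supplied by Lemma~\ref{P is f. cogen}, and then to exploit the $GC2$ hypothesis to manufacture a complete set of local idempotents. First I would invoke Lemma~\ref{charac of weak} to write $R_{R}=E\oplus T$ with $E$ a $CS$ summand satisfying $S_{r}\subseteq ^{ess}E$; since $E$ is a direct summand of $R_{R}$ it is cyclic (hence finitely generated) and projective, so in particular quasi-projective, and $soc(T)\subseteq S_{r}\cap T\subseteq E\cap T=0$, i.e. $soc(T)=0$. Because $R$ is right Kasch, every simple right $R$-module is isomorphic to a minimal right ideal and therefore embeds in $S_{r}=soc(E)$; thus $\Omega (R)=C(E)$, and since $\Omega (E)\subseteq \Omega (R)$ (every simple quotient of the summand $E$ is a simple quotient of $R$) we obtain $|\Omega (E)|\leq |C(E)|$. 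Hence Lemma~\ref{P is f. cogen} applies to $P=E$ and yields the crucial conclusion that $E$ has finitely generated essential socle.

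Consequently $S_{r}=soc(E)$ is a finite direct sum of simple modules and $E$ has finite uniform dimension; being $CS$, $E$ decomposes as $E=U_{1}\oplus \cdots \oplus U_{n}$ with each $U_{i}$ uniform. Each $U_{i}$ is a direct summand of $R_{R}$, say $U_{i}=e_{i}R$ with the $e_{i}$ orthogonal idempotents, and each $U_{i}$ is projective with simple essential socle. The next step is to show that each $U_{i}$ is a local module, equivalently that $\mathrm{End}(U_{i})=e_{i}Re_{i}$ is a local ring. Here the $GC2$ hypothesis enters: a right $GC2$ ring is directly finite, and for any monic endomorphism $f$ of $R_{R}$ the image $f(R)\cong R_{R}$ is a summand by $GC2$, so direct finiteness forces $f(R)=R_{R}$. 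Given a monic $\alpha \in \mathrm{End}(U_{i})$, I would extend it by the identity on the complementary summand $(1-e_{i})R$ to obtain a monic endomorphism of $R_{R}$, which must then be onto, whence $\alpha (U_{i})=U_{i}$. Thus every monic endomorphism of the uniform module $U_{i}$ is an automorphism; since in a uniform module the endomorphisms with nonzero kernel are closed under addition (any two nonzero submodules intersect), these non-automorphisms form the unique maximal ideal, so $\mathrm{End}(U_{i})$ is local and $e_{1},\dots ,e_{n}$ are orthogonal local idempotents.

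It remains to assemble these into a \emph{complete} set, i.e. to show $1=e_{1}+\cdots +e_{n}$, which amounts to proving $T=0$, equivalently $S_{r}\subseteq ^{ess}R_{R}$. This is the step I expect to be the main obstacle: Lemma~\ref{P is f. cogen} only delivers essentiality of the socle inside the $CS$ summand $E$, and one must promote it to all of $R_{R}$. The plan is to use right Kasch once more: if $T\neq 0$ then, being a nonzero finitely generated projective module, $T$ has a simple quotient, which by Kasch embeds in $S_{r}\subseteq E$, producing a nonzero homomorphism $T\to E$ with simple image; playing such maps off against the $GC2$/direct-finiteness constraint should force a local summand with nonzero socle to split off $T$, and since $soc(T)=0$ this yields the contradiction $T=0$. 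Once $T=0$ is established, $R_{R}=e_{1}R\oplus \cdots \oplus e_{n}R$ is a finite direct sum of local right ideals, which is precisely the statement that $R$ is semiperfect.
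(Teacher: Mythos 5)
Your first two steps (the decomposition $R=E\oplus T$ via Lemma~\ref{charac of weak}, and the finitely generated essential socle of $E$ via the Kasch hypothesis and Lemma~\ref{P is f. cogen}) match the paper, and your analysis of why uniform projective summands of $E$ have local endomorphism rings is essentially the paper's as well. But your endgame has a genuine gap, and you have located it yourself: you need $T=0$ (equivalently $S_{r}\subseteq ^{ess}R_{R}$) so that $1=e_{1}+\cdots +e_{n}$, and what you offer for this is a hope, not an argument. Worse, the natural route to $S_{r}\subseteq ^{ess}R_{R}$ under these hypotheses is the Nicholson--Yousif lemma quoted later in the paper (semiperfect $+$ Kasch $+$ $min$-$CS$ implies essential socle), which \emph{presupposes} semiperfectness; so your plan runs the implication backwards. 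The missing idea is that no such step is needed if one uses Bass's characterization: $R$ is semiperfect if and only if every simple right $R$-module has a projective cover. This is exactly what the paper does: by Kasch, representatives $S_{1},\dots ,S_{n}$ of \emph{all} simple right $R$-modules sit inside $soc(E)$; since $E$ is $CS$, each $S_{i}$ is essential in a summand $Q_{i}$ of $E$, which is projective with local endomorphism ring, hence a projective cover of $Q_{i}/J(Q_{i})$; uniqueness of projective covers and $Q_{i}\cong Q_{j}\Leftrightarrow S_{i}\cong S_{j}\Leftrightarrow i=j$ show the $n$ simples $Q_{i}/J(Q_{i})$ are pairwise non-isomorphic, hence exhaust all $n$ isomorphism classes. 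So every simple module has a projective cover and $R$ is semiperfect, with $T$ never entering the picture. Your local idempotents $e_{1},\dots ,e_{n}$ carry the same information; you are blocked only because you insist on the ``sum of local idempotents equals $1$'' characterization.

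A secondary, repairable flaw: your asserted lemma that a right $GC2$ ring is directly finite is false. For an infinite-dimensional vector space $V$ over a field, $R=\mathrm{End}(V)$ is von Neumann regular, hence right $GC2$ (a right ideal isomorphic to $R_{R}$ is principal, hence generated by an idempotent, hence a summand), yet $ab=1\neq ba$ occurs in $R$. Fortunately you do not need direct finiteness: if $\alpha $ is a monic endomorphism of $U_{i}=e_{i}R$, then $\alpha (U_{i})\oplus (1-e_{i})R\cong R_{R}$ is a direct summand of $R$ by $GC2$, say $R=\left[ \alpha (U_{i})\oplus (1-e_{i})R\right] \oplus X$; intersecting with $U_{i}$ and using the modular law (legitimate since $\alpha (U_{i})\subseteq U_{i}$) gives $U_{i}=\alpha (U_{i})\oplus \left( U_{i}\cap ((1-e_{i})R\oplus X)\right) $, and uniformity of $U_{i}$ forces the second summand to vanish, so $\alpha $ is onto. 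With that repair your construction of local idempotents is sound; the proof still fails at the final assembly described above.
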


\begin{proof}
Since $R$ is a weak $CS$, so $E$ is $CS$ by Lemma \ref{charac of weak} and $%
R=E\oplus K$\ for some right ideal $K$ of $R$ and so $E$ is a finitely
generated projective module. By Lemma \ref{P is f. cogen}, $E$ has a
finitely generated essential socle. Then, by hypothesis, there exist simple
submodules $S_{1},\cdots ,S_{n}$\ of $E$\ such that $\{S_{1},\cdots ,S_{n}\}$
is a complete set of representatives of the isomorphism classes of simple
right $R$-modules. Since $E$\ is $CS$, there exist submodules $Q_{1},\cdots
,Q_{n}$\ of $E$\ such that $Q_{1},\cdots Q_{n}$\ is an direct summands of $E$%
\ and $(S_{i})_{R}\subseteq ^{ess}(Q_{i})_{R}$\ for $i=1,\cdots ,n$. Since $%
Q_{i}$\ is an indecomposable projective and $GC2$ $R$-module, it has a local
endomorphism ring; and since $Q_{i}$\ is projective, $J(Q_{i})$\ is maximal
and small in $Q_{i}$. Then $Q_{i}$\ is a projective cover of the simple
module $Q_{i}/J(Q_{i})$. Note that $Q_{i}\cong Q_{j}$\ clearly implies $%
Q_{i}/J(Q_{i})\cong Q_{j}/J(Q_{j})$; and the converse also holds because
every module has at most one projective cover up to isomorphism. It is clear
that $Q_{i}\cong Q_{j}$\ if and only if $S_{i}\cong S_{j}$\ if and only if $%
i=j$. Thus, $\{Q_{1}/J(Q_{1}),\cdots ,Q_{n}/J(Q_{n})\}$\ is a complete set
of representatives of the isomorphism classes of simple right $R$-modules.
Hence every simple right $R$-module has a projective cover. Therefore $R$\
is semiperfect.
\end{proof}

The following example show that the proof of \cite[Proposition 13]{Zeyada}
is not true, since the endomorphism ring of an indecomposable projective
module which is an essential extension of a simple module may be not a local
ring. So we add an extra condition that $R$ is right $GC2$ to prove the
Proposition.

\begin{example}
Let $R$ be the ring of triangular matrices, $R=\left\{ \left( 
\begin{array}{cc}
a & 0 \\ 
b & c%
\end{array}%
\right) :a\in Z,b,c\in Q\right\} $. Take $P_{1}=\left\{ \left( 
\begin{array}{cc}
a & 0 \\ 
b & 0%
\end{array}%
\right) :a\in Z,b\in Q\right\} $ and $P_{2}=\left\{ \left( 
\begin{array}{cc}
0 & 0 \\ 
0 & c%
\end{array}%
\right) :c\in Q\right\} $, we see that $P_{1}$ is indecomposable projective
module with simple essential socle\ and $P_{2}$ is projective simple module.
The socle of $P_{1}$ is isomorphic to $P_{2}$ and its endomorphism ring is
isomorphic to $Z$ which is not local.
\end{example}

\begin{proposition}
\label{s+Kasch}If $R$ is right $s$-$CS$ and $GC2$, and right Kasch, then $R$
is semiperfect.
\end{proposition}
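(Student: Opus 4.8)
The plan is to mirror the proof of Proposition~\ref{weak+Kasch} almost verbatim, substituting the $s$-$CS$ hypothesis and Proposition~\ref{charac of s-cs} wherever the weak~$CS$ case invoked Lemma~\ref{charac of weak}. The guiding principle is that in both settings the decisive structural output is the same: a finitely generated projective right ideal carrying a finitely generated essential socle whose simple summands exhaust the isomorphism classes of simple right modules. Once that object is in hand, the endgame (producing projective covers of all simples via $GC2$) is identical, so the only real work is manufacturing it from the $s$-$CS$ hypothesis.

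First I would apply Proposition~\ref{charac of s-cs} to conclude that $Z_{2}(R_{R})$ is $CS$ and a direct summand of $R_{R}$, say $R = Z_{2}(R) \oplus K$. This plays the role that $R = E \oplus T$ played before, with $E = Z_{2}(R)$. As a direct summand of $R_{R}$, $Z_{2}(R)$ is finitely generated and projective, and being $CS$ it is finite-dimensional; I would then want to invoke Lemma~\ref{P is f. cogen} to extract a finitely generated essential socle. The hard part will be justifying the hypothesis $|\Omega(Z_{2}(R))| \le |C(Z_{2}(R))|$ that Lemma~\ref{P is f. cogen} requires — this is where the right Kasch condition enters. Kasch says every simple right module embeds in $R_{R}$; combined with the decomposition, one argues that every simple right module actually appears (up to isomorphism) as a submodule of the singular part $Z_{2}(R)$, because a nonzero homomorphic image landing in the nonsingular-ish complement cannot detect all simples once essentiality is used. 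This forces $C(Z_{2}(R))$ to be as large as $\Omega(R)$, supplying the needed inequality; this matching of representative sets is the step most likely to hide a subtlety.

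With Lemma~\ref{P is f. cogen} applied, $Z_{2}(R)$ has finitely generated essential socle, so by Kasch there are simple submodules $S_{1},\dots,S_{n}$ of $Z_{2}(R)$ forming a complete set of representatives of the simple right $R$-modules. Since $Z_{2}(R)$ is $CS$, each $S_{i}$ is essential in a direct summand $Q_{i}$ of $Z_{2}(R)$, hence of $R_{R}$, so each $Q_{i}$ is an indecomposable projective module that is an essential extension of a simple module. Here the $GC2$ hypothesis does the work that fails in general (as the triangular-matrix example shows): it guarantees that the endomorphism ring of each $Q_{i}$ is local, so $J(Q_{i})$ is maximal and small and $Q_{i}$ is the projective cover of the simple $Q_{i}/J(Q_{i})$.

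Finally I would record that $Q_{i} \cong Q_{j}$ iff $S_{i} \cong S_{j}$ iff $i=j$, using that projective covers are unique up to isomorphism, so $\{Q_{1}/J(Q_{1}),\dots,Q_{n}/J(Q_{n})\}$ is again a complete set of representatives of the simple right $R$-modules. Thus every simple right $R$-module has a projective cover, which is exactly the statement that $R$ is semiperfect. I expect the proof to be essentially a transcription of Proposition~\ref{weak+Kasch} after the first two sentences, with the localization of $\mathrm{End}(Q_{i})$ via $GC2$ and the verification of the inequality feeding Lemma~\ref{P is f. cogen} being the two places where care is genuinely needed.
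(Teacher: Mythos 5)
Your overall architecture is exactly the paper's: decompose $R=Z_{2}(R)\oplus K$ via Proposition~\ref{charac of s-cs}, feed $E=Z_{2}(R)$ into Lemma~\ref{P is f. cogen}, use the Kasch hypothesis to pick simple representatives inside $E$, then run the $CS$ plus $GC2$ projective-cover endgame. However, the step you yourself flagged as the delicate one is resolved with a claim that is false: it is \emph{not} true that every simple right $R$-module embeds in $Z_{2}(R)$. Any submodule of the Goldie torsion module $Z_{2}(R)$ is again Goldie torsion, and a simple Goldie torsion module is singular; hence a nonsingular simple module admits no nonzero homomorphism into $Z_{2}(R)$ whatsoever, no matter how Kasch and essentiality are combined. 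Concretely, for $R=k\times k[x]/(x^{2})$ with $k$ a field, one has $Z_{2}(R)=0\times k[x]/(x^{2})$, and the projective simple $k\times 0$ does not embed in it. This is precisely where the $s$-$CS$ case genuinely differs from the weak $CS$ case of Proposition~\ref{weak+Kasch}: there $soc(R)\subseteq ^{ess}E$ forces every simple submodule of $R$, hence by Kasch every simple module, into $E$; here nothing of the sort holds.

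The gap is repairable, but along different lines than you propose. The hypothesis of Lemma~\ref{P is f. cogen} holds because every simple \emph{quotient} of $Z_{2}(R)$ is Goldie torsion, hence singular, while by Kasch every singular simple module embeds in $R$ with singular image, so it lands in $Z(R)\subseteq Z_{2}(R)$; this gives $|\Omega (Z_{2}(R))|\leq |C(Z_{2}(R))|$. Consequently the simples $S_{1},\dots ,S_{n}$ you obtain represent only the \emph{singular} simple modules, and the endgame produces projective covers only for those, so your final sentence overreaches. To finish, one must add that every nonsingular simple right $R$-module is projective (if $R/M$ is not singular, then $M$ is not essential, hence is a direct summand of $R$), so it is its own projective cover; the two families together give semiperfectness. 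This singular/nonsingular split is exactly how the paper argues the analogous Theorem~\ref{Kasch+almost}. Be aware that the paper's own printed proof of the present proposition is a verbatim copy of the weak $CS$ argument and silently makes the same overstatement; your write-up merely makes explicit a defect the paper glosses over, but as written your justification of that step would fail.
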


\begin{proof}
Since $R$ is a weak $CS$, so $E$ is $CS$ by Lemma \ref{charac of s-cs} and $%
R=E\oplus K$\ for some right ideal $K$ of $R$ and so $E$ is a finitely
generated projective module. By Lemma \ref{P is f. cogen}, $E$ has a
finitely generated essential socle. Then, by hypothesis, there exist simple
submodules $S_{1},\cdots ,S_{n}$\ of $E$\ such that $\{S_{1},\cdots ,S_{n}\}$
is a complete set of representatives of the isomorphism classes of simple
right $R$-modules. Since $E$\ is $CS$, there exist submodules $Q_{1},\cdots
,Q_{n}$\ of $E$\ such that $Q_{1},\cdots Q_{n}$\ is an direct summands of $E$%
\ and $(S_{i})_{R}\subseteq ^{ess}(Q_{i})_{R}$\ for $i=1,\cdots ,n$. Since $%
Q_{i}$\ is an indecomposable projective and $GC2$ $R$-module, it has a local
endomorphism ring; and since $Q_{i}$\ is projective, $J(Q_{i})$\ is maximal
and small in $Q_{i}$. Then $Q_{i}$\ is a projective cover of the simple
module $Q_{i}/J(Q_{i})$. Note that $Q_{i}\cong Q_{j}$\ clearly implies $%
Q_{i}/J(Q_{i})\cong Q_{j}/J(Q_{j})$; and the converse also holds because
every module has at most one projective cover up to isomorphism. It is clear
that $Q_{i}\cong Q_{j}$\ if and only if $S_{i}\cong S_{j}$\ if and only if $%
i=j$. Thus, $\{Q_{1}/J(Q_{1}),\cdots ,Q_{n}/J(Q_{n})\}$\ is a complete set
of representatives of the isomorphism classes of simple right $R$-modules.
Hence every simple right $R$-module has a projective cover. Therefore $R$\
is semiperfect.
\end{proof}

\begin{lemma}
Let $R$ be a semiperfect, left Kasch, left $min$-$CS$ ring. Then the
following hold::

\begin{enumerate}
\item $S_{l}\subseteq _{R}^{ess}R$ and $soc(Re)$ is simple and essential in $%
Re$\ for all local idempotents $e\in R.$

\item $R$ is right Kasch if and only if $S_{l}\subseteq S_{r}$.

\item If $\{e_{1},...,e_{n}\}$ are basic local idempotents in $R$ then $%
\{soc(Re_{1}),...,soc(Re_{n})\}$ is a complete set of distinct
representatives of the simple left $R$-modules.
\end{enumerate}
\end{lemma}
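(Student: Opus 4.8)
The plan is to exploit the semiperfect hypothesis to reduce every assertion to the behaviour of the indecomposable summands $Re$ of $_{R}R$. Writing $_{R}R=\bigoplus_{i=1}^{n}Re_{i}$ with $e_{1},\dots ,e_{n}$ a basic set of local idempotents, each $Re_{i}$ is indecomposable with local endomorphism ring, and $S_{l}=\bigoplus_{i}soc(Re_{i})$. Thus $S_{l}\subseteq^{ess}{}_{R}R$ will follow once I show each $soc(Re_{i})$ is nonzero and essential in $Re_{i}$, and the claim in (1) that $soc(Re)$ is simple is the same statement for a single indecomposable summand.

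For the simplicity and essentiality in (1) I would start from a minimal left ideal $U\subseteq Re$ and apply the left min-$CS$ hypothesis to get $U\subseteq^{ess}L$ with $L\subseteq^{\oplus}{}_{R}R$; being an essential extension of the simple module $U$, $L$ is uniform and hence an indecomposable summand with local endomorphism ring. Since $U\subseteq Re$ gives $L\cap R(1-e)=0$, the exchange property of $L$ applied to $R=Re\oplus R(1-e)$ yields summands $A\subseteq Re$ and $B\subseteq R(1-e)$ with $R=L\oplus A\oplus B$. As $Re$ is indecomposable, $A=0$ or $A=Re$; the latter would force $L\cap Re=0$, contradicting $0\neq U\subseteq L\cap Re$, so $A=0$. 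Then $R=L\oplus B$ with $B\subseteq R(1-e)$, and comparing with $R=Re\oplus R(1-e)$ (again using that $L$ is indecomposable) gives $B=R(1-e)$ and an isomorphism $L\cong Re$ induced by the projection along $R(1-e)$. Transporting $U\subseteq^{ess}L$ through it shows $U\subseteq^{ess}Re$, whence $soc(Re)=U$ is simple and essential, provided $soc(Re)\neq0$.

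The nonvanishing of the socles, together with (3), I would obtain by a counting argument from left Kasch. Each simple left module embeds in $_{R}R=\bigoplus Re_{i}$, hence (its image being simple) into a single $Re_{i}$, so it is isomorphic to the necessarily simple module $soc(Re_{i})$. This assigns to each isomorphism type of simple left module an index $i$; the assignment is injective because $soc(Re_{i})$ determines the type, and since there are exactly $n$ simple left types and $n$ basic idempotents it is a bijection. Bijectivity forces $soc(Re_{i})\neq0$ for all $i$ (completing (1)) and shows that $\{soc(Re_{1}),\dots ,soc(Re_{n})\}$ is an irredundant complete set of representatives of the simple left modules, which is exactly (3).

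For (2) I would first record that $S_{l}=l_{R}(J)$ and $S_{r}=r_{R}(J)$ are two-sided ideals, so that $S_{l}\subseteq S_{r}$ is equivalent to $S_{l}J=0$, i.e. to $S_{l}$ being semisimple as a right module. Granting this reformulation, the forward implication should follow by showing that right Kasch forces $S_{l}J=0$ through an annihilator computation, while the converse should produce, from the $n$ distinct simple left constituents of the right-semisimple module $S_{l}\subseteq S_{r}$, embeddings of all $n$ simple right modules into $R_{R}$ via the natural left--right pairing of simple modules over a semiperfect ring. I expect the main obstacle to lie here in (2): unlike (1) and (3), which are governed purely by the left structure, the converse of (2) requires matching the left simple types carried by $S_{l}$ with the right simple types and making this correspondence exhaustive, so that every simple right module is hit; the simplicity of $soc(Re)$ in (1), which hinges on the exchange-property step above, is the other place where care is needed.
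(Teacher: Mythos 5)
The paper offers no argument of its own here---its ``proof'' is the single line ``See \cite[Lemma 4.5]{Nich&You}''---so any complete proof necessarily goes beyond it. Your treatment of (1) and (3) is essentially sound and close in spirit to the argument in Nicholson--Yousif: the min-$CS$/exchange step showing that a simple $U\subseteq Re$ must be essential in $Re$ works, and the Kasch counting argument then forces every $soc(Re_{i})$ to be nonzero, pairwise non-isomorphic, and exhaustive. But three justifications need repair. First, your inference ``$L$ is uniform and hence \ldots\ has local endomorphism ring'' is invalid as stated: a uniform indecomposable projective need not have local endomorphism ring, and the paper's own Example (the triangular matrix ring, where $P_{1}$ is uniform projective with $End(P_{1})\cong Z$) exists precisely to make this point. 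What saves you is semiperfectness: $L$ is an indecomposable direct summand of $_{R}R$, so $L=Rf$ with $f$ a primitive idempotent, and primitive idempotents in semiperfect rings are local; only then may you invoke the exchange property. Second, a simple submodule of $\bigoplus_{i}Re_{i}$ need not be \emph{contained} in a single $Re_{i}$ (consider a diagonal copy of $S$ in $S\oplus S$); you must compose with a coordinate projection that is nonzero, hence injective, on it. Third, the annihilator identities have their sides swapped: over a semiperfect ring $S_{l}=r_{R}(J)=\{x:Jx=0\}$ and $S_{r}=l_{R}(J)=\{x:xJ=0\}$.

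The genuine gap is part (2), which you do not prove: both implications are left as intentions (``should follow by \ldots\ an annihilator computation,'' ``should produce \ldots\ via the natural left--right pairing''), and you say yourself that the main obstacle lies there. The missing ingredient is a concrete embedding criterion: over a semiperfect ring, the simple right module $e_{i}R/e_{i}J$ embeds in $R_{R}$ if and only if there exists $0\neq x\in Re_{i}$ with $xJ=0$, i.e.\ if and only if $S_{r}\cap Re_{i}=l_{R}(J)\cap Re_{i}\neq 0$ (given such an $x$, the map $e_{i}r+e_{i}J\mapsto xr$ is a well-defined monomorphism since $e_{i}R/e_{i}J$ is simple; conversely the image of $e_{i}$ under any embedding is such an $x$). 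With this lemma, both directions fall out of your part (1): if $S_{l}\subseteq S_{r}$, then $0\neq soc(Re_{i})\subseteq S_{r}\cap Re_{i}$ for every $i$, so every simple right module embeds and $R$ is right Kasch; conversely, if $R$ is right Kasch, then $S_{r}\cap Re\neq 0$ for each local idempotent $e$, and since $S_{r}$ is a two-sided ideal this intersection is a left submodule of $Re$, so essentiality and simplicity of $soc(Re)$ force $soc(Re)\subseteq S_{r}$; summing over a decomposition of $R$ into principal indecomposables gives $S_{l}\subseteq S_{r}$. Without this criterion (or an equivalent), the appeal to a ``natural left--right pairing'' does not establish either implication.
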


\begin{proof}
See \cite[Lemma 4.5]{Nich&You}.
\end{proof}

Recall that a ring $R$ is right minfull if it is semiperfect, right
mininjective, and $soc(eR)\neq 0$ for each local idempotent $e\in R$.

\begin{corollary}
If $R$ is commutative $s$-$CS$ (weak $CS$) and Kasch, then $R$ is minfull.
\end{corollary}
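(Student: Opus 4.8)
The plan is to verify the three conditions defining a minfull ring---semiperfectness, mininjectivity, and $soc(eR)\neq 0$ for every local idempotent $e$---noting that, since $R$ is commutative, the left and right versions of all these notions coincide. I would first produce semiperfectness from the Propositions already established, which require $R$ to be $GC2$. The decisive preliminary remark is that in a commutative Kasch ring every non-zero-divisor is a unit: if a regular element $a$ were a non-unit, then $aR$ would sit inside a maximal ideal $M$, and the Kasch hypothesis would furnish $0\neq x$ with $Mx=0$, so $ax=0$, contradicting regularity. Hence whenever a right ideal $I\cong R_R$ we may write $I=aR$ with $a$ regular, so $a$ is a unit and $I=R$ is a summand; thus $R_R$ is $GC2$. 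Since $R$ is also right Kasch, Proposition \ref{weak+Kasch} in the weak $CS$ case, or Proposition \ref{s+Kasch} in the $s$-$CS$ case, now shows that $R$ is semiperfect.

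Next I would use that a commutative semiperfect ring has central idempotents and so decomposes as a finite product $R=R_{1}\times\cdots\times R_{n}$ of commutative local rings, with $R_{i}=e_{i}R$ for the local idempotents $e_{i}$; writing $M_{i}$ for the associated maximal ideals of $R$, one has $soc(R)=\bigoplus_{i}soc(e_{i}R)$ and, by a direct check in each local factor, $\operatorname{ann}(M_{i})=soc(e_{i}R)$. To pin down these socles I would verify that $R$ is $min$-$CS$. In the weak $CS$ case this is immediate, since every simple submodule is semisimple and hence essential in a summand. In the $s$-$CS$ case I would observe that in each non-field factor the maximal ideal is essential, so every simple submodule lying there is singular (while in a field factor the unique simple submodule is the factor itself, already a summand); the $s$-$CS$ hypothesis then places every simple submodule essentially inside a summand. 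With $R$ semiperfect, Kasch and $min$-$CS$, the preceding Lemma \cite[Lemma 4.5]{Nich&You} gives that $soc(e_{i}R)$ is simple and essential in $e_{i}R$ for every $i$. In particular $soc(eR)\neq 0$ at every local idempotent, which is one of the minfull conditions.

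Finally, for mininjectivity I would show that every homomorphism from a minimal ideal $kR$ into $R$ is multiplication by a ring element, i.e.\ that $\operatorname{ann}(\operatorname{ann}(k))=Rk$. Since $kR$ is simple, $\operatorname{ann}(k)$ is a maximal ideal, say $\operatorname{ann}(k)=M_{i}$; then $\operatorname{ann}(\operatorname{ann}(k))=\operatorname{ann}(M_{i})=soc(e_{i}R)$, which is simple by the previous paragraph. As it contains the nonzero ideal $kR$, it must equal $kR=Rk$, giving the identity. Combining the three verified conditions, $R$ is semiperfect, mininjective and has $soc(eR)\neq 0$ at every local idempotent, so $R$ is minfull. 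I expect the passage to mininjectivity to be the main obstacle: one must recognize that it is governed entirely by the annihilator equality $\operatorname{ann}(\operatorname{ann}(k))=Rk$ and then reduce that to the simplicity of the factor socles $soc(e_{i}R)$---equivalently, rule out a maximal ideal annihilating a socle of length exceeding one---which is precisely where the product decomposition and the preceding Lemma do the real work, the $GC2$ reduction from the Kasch hypothesis being the other genuinely nontrivial ingredient.
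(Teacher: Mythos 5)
Your proposal is correct, and it follows the same skeleton as the paper's proof --- get a C2-type condition out of the Kasch hypothesis so that Proposition \ref{weak+Kasch} (resp.\ Proposition \ref{s+Kasch}) yields semiperfectness, then feed the result into the quoted Lemma of Nicholson--Yousif --- but you open up two black boxes that the paper leaves closed. First, where the paper simply asserts ``every Kasch ring is $C2$'' (a fact from the literature), you prove the needed $GC2$ condition directly in the commutative case via the observation that regular elements of a commutative Kasch ring are units; this is more elementary and self-contained, though less general. Second, and more substantially, the paper concludes minfullness by citing \cite[Proposition 4.3]{Nich&You}, whereas you derive mininjectivity by hand: you decompose the commutative semiperfect ring as a finite product of local rings, identify $\operatorname{ann}(M_{i})=soc(e_{i}R)$, and verify the annihilator criterion $\operatorname{ann}(\operatorname{ann}(k))=Rk$ for minimal ideals using the simplicity of the factor socles supplied by the Lemma. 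In doing so you also check explicitly that the Lemma's $min$-$CS$ hypothesis is satisfied --- trivially in the weak $CS$ case, and in the $s$-$CS$ case via the argument that a simple ideal sitting in a non-field local factor is singular (its annihilator, the corresponding maximal ideal, is essential). This last verification is a genuine gap-filling step: the paper applies the Lemma without ever noting that commutative $s$-$CS$ rings are $min$-$CS$, which is not purely formal since $s$-$CS$ only controls singular submodules. The trade-off is the usual one: the paper's proof is two lines resting on the book, while yours is longer but exposes exactly where commutativity, the Kasch condition, and the $s$-$CS$ hypothesis each do their work.
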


\begin{proof}
Since every Kasch ring is $C2$, so $R$ is semiperfect by Proposition \ref%
{weak+Kasch} (Proposition \ref{s+Kasch}). Thus using the above Lemma and 
\cite[Proposition 4.3]{Nich&You} $R$ is minfull.
\end{proof}

\begin{theorem}
If $R$\ is right weak $CS$ ($s$-$CS$), $GC2$ and every cyclic right $R$%
-module can be embedded in a free module (right $CF$ ring ) then $R$\ is
right artinian
\end{theorem}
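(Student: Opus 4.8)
The plan is to reduce the four hypotheses to the semiperfectness that the paper has already produced, and then to run the ascending socle (Loewy) series of $R_{R}$, controlling it by means of the $CF$-embedding. First I would record the standard observation that every right $CF$ ring is right Kasch: a simple right module is cyclic, hence embeds in a free module $R^{(\Lambda )}$, and being simple its image lies inside a single copy of $R_{R}$; thus every simple right $R$-module embeds in $R_{R}$, which is the Kasch condition. Now $R$ is weak $CS$ (resp. $s$-$CS$), $GC2$, and right Kasch, so Proposition \ref{weak+Kasch} (resp. Proposition \ref{s+Kasch}) applies directly and gives that $R$ is semiperfect. In particular $R_{R}=E\oplus T$ with $E$ a finitely generated quasi-projective $CS$-module and $soc(R)\subseteq ^{ess}E$ (Lemma \ref{charac of weak}; in the $s$-$CS$ case $Z_{2}(R)$ plays the role of $E$ via Proposition \ref{charac of s-cs}).

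Next I would extract the finiteness of the socle. Right Kasch forces every simple right module, in particular every simple quotient of $E$, to embed in $soc(E)$, so that $|\Omega (E)|\leq |C(E)|$; Lemma \ref{P is f. cogen} then yields that $E$ has finitely generated essential socle, and hence that $soc(R_{R})$ is finitely generated and semisimple of finite length. The delicate intermediate point is to upgrade this to $soc(R_{R})\subseteq ^{ess}R_{R}$, equivalently that the complement $T$ (which has zero socle) vanishes; here one must use the full strength of the semiperfect, right Kasch and $GC2$ hypotheses, since a projective summand with zero socle cannot be excluded by Kasch alone.

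With $soc(R_{R})$ finitely generated and essential, I would run the socle series $0=S_{0}\subseteq S_{1}\subseteq S_{2}\subseteq \cdots $, where $S_{k+1}/S_{k}=soc(R/S_{k})$. Two facts drive the argument. First, each layer has finite length: since $S_{k}$ is a right ideal, $R/S_{k}$ is cyclic, so by the $CF$ hypothesis it embeds in some $R^{n_{k}}$, whence $S_{k+1}/S_{k}=soc(R/S_{k})$ embeds in $soc(R^{n_{k}})=soc(R_{R})^{n_{k}}$, which has finite length because $soc(R_{R})$ is finitely generated semisimple. Second, the series grows strictly until it reaches $R$: if $S_{k}\neq R$ then $R/S_{k}$ is a nonzero cyclic module embedded in $R^{n_{k}}$, and since $soc(R_{R})$ is essential, $soc(R_{R})^{n_{k}}$ is essential in $R^{n_{k}}$ and meets the copy of $R/S_{k}$ nontrivially, so $soc(R/S_{k})=S_{k+1}/S_{k}\neq 0$.

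The main obstacle is the termination of this series, that is, showing that the strictly ascending chain $S_{0}\subsetneq S_{1}\subsetneq \cdots $ reaches $R$ after finitely many steps; this is exactly the spot where the difficulty of the $CF$-to-artinian problem concentrates, and where the weak $CS$ (resp. $s$-$CS$) and $GC2$ hypotheses must earn their keep. I would close it by establishing that $R_{R}$ is right noetherian, so that the strictly increasing chain of right ideals $S_{k}$ must stabilize, necessarily at $R$ by the strict-growth step just described. At that point $R_{R}$ has a finite socle series with finite-length factors, hence is itself of finite length, so $R$ is right artinian. The $s$-$CS$ case is carried out identically, replacing Lemma \ref{charac of weak} and Proposition \ref{weak+Kasch} by Proposition \ref{charac of s-cs} and Proposition \ref{s+Kasch} and using $Z_{2}(R)$ in place of $E$.
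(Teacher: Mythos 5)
Your opening reduction coincides with the paper's: right $CF$ implies right Kasch, and then Proposition \ref{weak+Kasch} (resp.\ Proposition \ref{s+Kasch}) gives semiperfectness. (One small slip there: a simple submodule of a free module need not lie ``inside a single copy of $R_{R}$''; rather, some coordinate projection is nonzero on it and hence monic, which is what gives the embedding into $R_{R}$.) After that point, however, the proposal has two genuine gaps. The first is the step $soc(R_{R})\subseteq ^{ess}R_{R}$, i.e.\ $K=0$: you correctly flag it as the ``delicate intermediate point'' but never supply an argument, saying only that one must use the full strength of the hypotheses. The paper closes exactly this step by applying (the right-hand version of) the Lemma it quotes from \cite[Lemma 4.5]{Nich&You}: a semiperfect, right Kasch, right $min$-$CS$ ring has $S_{r}\subseteq ^{ess}R_{R}$; since weak $CS$ (resp.\ $s$-$CS$) implies $min$-$CS$ and the complementary summand $K$ has zero socle (resp.\ is killed by the same essentiality argument), $K=0$.

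The second gap is fatal. Your Loewy-series argument establishes that each layer $S_{k+1}/S_{k}$ has finite length and that the chain grows strictly while $S_{k}\neq R$, but finite-length layers plus strict growth give no bound on the length of the chain and no guarantee that it ever reaches $R$. To terminate it you write that you ``would close it by establishing that $R_{R}$ is right noetherian'' --- but that is precisely the hard content of the $CF\Rightarrow$ artinian problem, the very thing the theorem's extra hypotheses are needed for, and you offer no argument whatsoever; as written this is a placeholder, not a proof. The paper avoids this cliff entirely: once $K=0$ it concludes that $R_{R}$ is $CS$, and then invokes G\'{o}mez Pardo--Guil Asensio \cite[Corollary 2.9]{Gomez 2}, which states that a right $CS$, right $CF$ ring is right artinian. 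That cited theorem is the engine doing the work your socle-series argument cannot do on its own; without it (or an equivalent substitute), your proposal does not prove the statement.
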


\begin{proof}
If $R$\ is right weak $CS$ ($s$-$CS$) right $CF$, then by Lemma \ref{charac
of weak} (Lemma \ref{charac of s-cs}) $R=E\oplus K$ where $E$ is $CS\ $and $%
soc\left( K\right) =0$ ($Z\left( K\right) =0$). Thus by Proposition \ref%
{weak+Kasch} (Proposition \ref{s+Kasch}), $R$ is semiperfect. The above
Lemma gives $S_{r}\subseteq ^{ess}R_{R}$, so $K=0$. Hence $R$ is $CS$ $\ $%
and $R$ is right artinain by \cite[Corollary 2.9]{Gomez 2}.
\end{proof}

\begin{proposition}
Let $R$ be a right $FGF$, right weak $CS$ ($s$-$CS$) and right $GC2$ ring.
Then $R$ is $QF$.
\end{proposition}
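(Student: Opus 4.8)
The plan is to upgrade the already-established artinian conclusion to full quasi-Frobenius using the $FGF$ hypothesis. Recall the famous $FGF$-conjecture states that right $FGF$-rings are $QF$; it is known to hold under many side conditions, and the strategy here is to show that our hypotheses force one of those known conditions, or to run the classical argument directly.

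First I would observe that every right $FGF$-ring is a right $CF$-ring, since any cyclic module is in particular finitely generated and hence embeds in a free module. Therefore the preceding Theorem applies verbatim: a right weak $CS$ (respectively $s$-$CS$), $GC2$, right $CF$-ring is right artinian. So we obtain for free that $R$ is right artinian, hence two-sided noetherian once we know it is, say, semiperfect with the appropriate socle conditions. In particular $R$ is semiperfect, $S_r \subseteq^{ess} R_R$, and $R$ is right artinian, which already places us far beyond the generic $FGF$ situation.

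The main step is then to invoke the standard fact that a right artinian right $FGF$-ring is quasi-Frobenius. The cleanest route is through right self-injectivity: a ring is $QF$ exactly when it is right artinian and right self-injective (equivalently, one-sided noetherian and one-sided self-injective). For $FGF$-rings the route to self-injectivity is the classical one due to Faith and others---a right $FGF$-ring is right self-injective provided it is right Kasch (which holds here, since $R$ is semiperfect with essential right socle as produced in the proof of Proposition~\ref{weak+Kasch}) together with the embedding property forcing $R_R$ to be an injective cogenerator. Concretely, I would use that $R$ being right $CF$ with $S_r \subseteq^{ess} R_R$ makes $R_R$ a cogenerator, and combine this with artinian-ness so that $R_R$ becomes an injective cogenerator; a right artinian ring whose right module $R_R$ is an injective cogenerator is $QF$.

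The hard part will be rigorously extracting right self-injectivity from the $FGF$ condition rather than merely from $CF$; this is exactly the delicate point around which the whole $FGF$-conjecture revolves, so I expect the argument to lean on a previously known theorem (for instance that a right artinian, or right perfect, right $FGF$-ring is $QF$) rather than a self-contained derivation. Assuming that reduction, the remaining bookkeeping---checking that the decomposition $R = E \oplus K$ collapses to $K=0$ so that $R$ itself is $CS$, and that the Kasch and cogenerator properties transfer to both sides---is routine and parallels the proof of the preceding Theorem.
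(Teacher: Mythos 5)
Your overall route is valid and genuinely different from the paper's, but it survives only because of the fallback you name in your last paragraph; the ``concrete'' mechanism you sketch in the middle is wrong, as explained below.

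For comparison: the paper never passes through the artinian theorem at all. Its proof is two citations: a right $FGF$ ring is automatically right Kasch (every simple module is cyclic, embeds in a free module, and then some coordinate projection is monic on it), so Proposition \ref{weak+Kasch} (resp.\ Proposition \ref{s+Kasch}) makes $R$ semiperfect, and then \cite[Theorem 3.7]{Gomez&Asensio} gives $QF$ directly. Your route --- $FGF\Rightarrow CF$, so the preceding Theorem makes $R$ right artinian, and then one quotes a known partial result on the $FGF$-conjecture, namely that a right $FGF$ ring with ACC on right annihilators (in particular a right noetherian or right artinian one) is $QF$ (Faith; see Chapter 7 of \cite{Nich&You}) --- is also correct, reuses more of the paper's own machinery, and simply outsources a different theorem from the literature than the paper does. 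Incidentally, your aside that right artinian makes $R$ ``two-sided noetherian'' is not needed and is not true in general.

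The genuine errors are in your middle paragraph. First, ``a right $FGF$-ring is right self-injective provided it is right Kasch'' cannot be a known theorem: since every right $FGF$ (indeed every right $CF$) ring is automatically right Kasch, that assertion is literally equivalent to the full $FGF$-conjecture, which is open; the actual Kasch-type partial result requires \emph{left} Kasch. (Note also that right Kasch is an \emph{input} to Proposition \ref{weak+Kasch}, supplied by $CF$, not something produced by it.) Second, ``$R$ right $CF$ with $S_{r}\subseteq ^{ess}R_{R}$ makes $R_{R}$ a cogenerator'' is false: Bj\"{o}rk's example \cite{Bjork} is a local, left and right artinian ring with $J^{2}=0$ whose only right ideals are $0\subset J\subset R$ and $J\cong R/J$, so it is right $CF$ with essential right socle, yet it is not $QF$. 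If $R_{R}$ were a cogenerator there, the injective hull of the unique simple right module would embed as a direct summand of the indecomposable module $R_{R}$, forcing $R_{R}$ to be injective and $R$ to be $QF$, a contradiction. So right self-injectivity can never be extracted from $CF$ plus artinian-ness; the $FGF$ hypothesis is indispensable at exactly the point you flagged, and your proof becomes correct precisely when the cogenerator detour is deleted and replaced by the citation of the artinian (ACC on annihilators) $FGF$ theorem.
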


\begin{proof}
It clear by Proposition \ref{weak+Kasch} (Proposition \ref{s+Kasch}) , and 
\cite[Theorem 3.7]{Gomez&Asensio}.
\end{proof}

\section{Almost injective Modules}

\begin{definition}
A right $R$-module $M$ is called almost injective, if $M=E\oplus K$ where $E$
is injective and $K$ has zero radical. A ring $R$ is called right almost
injective, if $R_{R}$ is almost injective.
\end{definition}

In \cite{Zey&Huis&Amin}, the statement of Theorem 2.12 is not true, so the
proof of $\left( 3\right) \Longrightarrow \left( 1\right) $. The following
Proposition is the true version of \cite[Theorem 2.12]{Zey&Huis&Amin}. Then
we rewrite the related results.

\begin{proposition}
\label{semilocal}For a ring $R$ the following are true:

\begin{enumerate}
\item $R$ is semisimple if and only if every almost-injective right $R$%
-module is injective.

\item If $R$ is semilocal, then every rad-injective right R-module is
injective.
\end{enumerate}
\end{proposition}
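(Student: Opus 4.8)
The plan is to treat the two parts separately, using in each the behaviour of the radical; part (1) is the more decisive one and part (2) is a localized refinement.

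For (1), the forward implication is immediate: if $R$ is semisimple then every right $R$-module is injective, so in particular every almost-injective module is injective. For the converse, assume every almost-injective right $R$-module is injective. The first move is to specialize the definition by taking the injective summand to be $0$: any module $K$ with $rad(K)=0$ equals $0\oplus K$ with $0$ injective and $rad(K)=0$, so it is almost injective, hence injective by hypothesis. Thus every right $R$-module with zero radical is injective.

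Applying this to simple modules (which have zero radical) shows that every simple right $R$-module is injective, i.e.\ $R$ is a right $V$-ring. The key step is then to recall, or reprove in one line, that over a right $V$-ring every right $R$-module $M$ has $rad(M)=0$: for $0\neq x\in M$ the cyclic module $xR$ has a maximal submodule $N$ with $x\notin N$, the simple module $xR/N$ is injective, so the projection $xR\to xR/N$ extends to $M\to xR/N$, exhibiting a maximal submodule of $M$ missing $x$. Consequently every right $R$-module has zero radical, hence by the previous paragraph every right $R$-module is injective; and a ring all of whose right modules are injective is semisimple, since each submodule, being injective, splits off as a direct summand. This completes (1), and I expect the $V$-ring reduction to be the only nonroutine point.

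For (2), I would use that $R$ semilocal means $R/J$ is semisimple, where $J=J(R)=rad(R_{R})$, and first record the identity $rad(M)=MJ$ for every right $R$-module $M$: the inclusion $MJ\subseteq rad(M)$ is automatic, and $M/MJ$, being a module over the semisimple ring $R/J$, is semisimple with zero radical, giving the reverse inclusion; hence $M/rad(M)$ is semisimple for every $M$. With $M$ rad-injective in the sense of \cite{Zey&Huis&Amin} (every homomorphism $rad(N)\to M$ extends to $N$), I would prove injectivity through the Baer criterion: given a right ideal $I$ and $f:I\to M$, use that $R/J$ is semisimple to write $R/J=(I+J)/J\oplus B/J$, so that $R=I+B$ with $I\cap B\subseteq I\cap J\subseteq J$, and then build the extension of $f$ over $R$ by extending it separately over $rad(R)=J$ (where rad-injectivity applies directly) and over the complement $B$, gluing the two along $I$. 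The main obstacle---and the place where the semilocal hypothesis does the real work---is the compatibility of these two partial extensions on the overlap $I\cap B\subseteq J$: one must arrange the extension furnished by rad-injectivity to agree there with $f$, which is exactly what the splitting off of the part of $I$ lying outside $J$, guaranteed by the semisimplicity of $R/J$, is meant to secure.
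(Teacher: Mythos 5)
Your part (1) is correct and is essentially word-for-word the paper's own argument: specializing the definition to a zero injective summand shows every module with zero radical is injective, hence $R$ is a right $V$-ring, hence every right $R$-module has zero radical and is therefore injective, so $R$ is semisimple. You merely fill in the standard $V$-ring step that the paper leaves implicit; no issues there.

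In part (2) your skeleton is also the paper's: the paper invokes Lomp's characterization of semilocal rings to get a right ideal $L$ with $K+L=R$ and $K\cap L\subseteq J$, and your lifting of a complement of $(I+J)/J$ in the semisimple ring $R/J$ is an equally valid way to produce such a $B$ (with the bonus that $B\supseteq J$, so $I\cap B=I\cap J$); the gluing $F(i+b)=f(i)+g(b)$ and its well-definedness are likewise the same. But the decisive step is missing. To glue, you need a homomorphism $g$ on $B$ (or on $R$) that agrees with $f$ on $I\cap B$. Your plan---extend ``over $J$, where rad-injectivity applies directly,'' then arrange compatibility---does not produce such a $g$: under the definition you yourself quote (homomorphisms defined on the radical \emph{itself} extend), rad-injectivity gives nothing here, because the only map you possess near $J$ is $f|_{I\cap B}$, defined on a generally proper submodule of $J$; there is no natural map on all of $J$ to which the hypothesis could be applied, and no reason $f|_{I\cap B}$ should extend to $J$ first. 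The appeal to the splitting of $R/J$ to ``secure'' the compatibility is circular: the splitting produces $B$ and nothing more. The paper closes exactly this step by applying rad-injectivity \emph{directly to the map} $f|_{K\cap L}\colon K\cap L\to M$, i.e., it uses the property in the form ``every homomorphism from a right ideal contained in $J$ (a small right ideal) extends to $R$''; with that reading one obtains $g\colon R\to M$ with $g=f$ on $K\cap L$ in one stroke and the proof is finished. So either adopt that submodule form of rad-injectivity (as the paper's proof tacitly does) and apply it to $f|_{I\cap B}$, or your argument for (2) remains open at its crucial point. (Your preliminary observations that $\mathrm{rad}(M)=MJ$ and that $M/\mathrm{rad}(M)$ is semisimple over a semilocal ring are correct but are never actually used.)
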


\begin{proof}
$\left( 1\right) $. Assume that every almost-injective right $R$-module is
injective, then every right $R$-module with zero radical is injective. Thus
every semisimple right $R$-module is injective and $R$ is right $V$-ring.
Hence, every right $R$-module has a zero radical. Therefore, every right $R$%
-module is injective and $R$ is semisimple. The converse is clear.

$\left( 2\right) $. Let $R$ be a semilocal ring and $M$ be a rad-injective
right $R$-module. Consider a homomorphism $f:K\longrightarrow M$ where $K$
is a right ideal of $R$. Since $R$ is semilocal, there exists a right ideal $%
L$ of $R$ such that $K+L=R$ and $K\cap L\subseteq J$ \cite{Lomp}. Then there
exists a $R$-homomorphism $g:R\longrightarrow M$ such that $g\left( x\right)
=f\left( x\right) $ for every $x\in K\cap L$. Define $F:R\longrightarrow M$
by $F\left( x\right) =f\left( k\right) +g\left( l\right) $ for any $x=k+l$
where $k\in K$ and $l\in L$. It is clear that $F$ is a well-defined $R$%
-homomorphism such that $F\mid _{K}=f$. i.e. $F$ extends $f$. Therefore $M$
is injective.
\end{proof}

A ring $R$ is called quasi-Frobenius $(QF)$ if $R$ is right (or left)
artinian and right (or left) self-injective. Also, $R$ is $QF$ if and only
if every injective right $R$-module is projective.

\begin{theorem}
$R$ is a quasi-Frobenius ring if and only if every rad-injective right $R$%
-module is projective.
\end{theorem}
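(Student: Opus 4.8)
The plan is to deduce both implications from the characterization recalled immediately before the statement, namely that $R$ is $QF$ if and only if every injective right $R$-module is projective. Two auxiliary observations drive the argument. First, every injective module is rad-injective: the extension property defining rad-injectivity (extending a homomorphism defined on the radical of a module) is a special case of full injectivity, so this inclusion of classes is immediate from the definition in \cite{Zey&Huis&Amin}. Second, I will invoke Proposition \ref{semilocal}(2), which guarantees that over a semilocal ring every rad-injective right $R$-module is in fact injective.

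For the implication from $QF$ to ``every rad-injective module is projective'', I would first record that a $QF$ ring is two-sided artinian, so $J(R)$ is nilpotent and $R/J(R)$ is semisimple, whence $R$ is semilocal. Proposition \ref{semilocal}(2) then upgrades every rad-injective right $R$-module to an injective one, and the $QF$ hypothesis upgrades injective to projective; chaining these two facts yields that every rad-injective right $R$-module is projective.

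For the converse I would argue through the same characterization. Assuming every rad-injective right $R$-module is projective, and using the observation that every injective module is rad-injective, I conclude that every injective right $R$-module is projective. By the stated characterization of $QF$ rings, $R$ is then $QF$.

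The step I expect to carry the real weight is the forward direction's joint appeal to semilocality and Proposition \ref{semilocal}(2): the whole argument hinges on the fact that rad-injectivity collapses to injectivity precisely when $R$ is semilocal, and a $QF$ ring supplies exactly this semilocality. The converse is by contrast essentially formal once one notes that injective implies rad-injective, so the only care needed there is to make that last implication explicit from the definition of rad-injectivity.
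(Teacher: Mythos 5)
Your proposal is correct and takes essentially the same approach as the paper: the forward direction passes from quasi-Frobenius to semilocal (via artinian) and applies Proposition \ref{semilocal}(2) to collapse rad-injective to injective, then projective; the converse uses that injective modules are rad-injective together with the characterization of quasi-Frobenius rings as those over which every injective right module is projective. The only difference is cosmetic—you spell out the semilocality and the ``injective implies rad-injective'' step, which the paper leaves implicit.
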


\begin{proof}
If $R$ is quasi-Frobenius, then $R$ is right artinian, and by Proposition %
\ref{semilocal} (2), every rad-injective right $R$-module is injective.
Hence, every rad-injective right $R$-module is projective. Conversely, if
every rad-injective right $R$-module is projective, then every injective
right $R$-module is projective. Thus, $R$ is quasi-Frobenius.
\end{proof}

Recall that a ring $R$ is called a right pseudo-Frobenius ring (right $PF$%
-ring) if the right $R$-module $R_{R}$ is an injective cogenerator.

\begin{proposition}
\label{PF rings}The following are equivalent:

\begin{enumerate}
\item $R$ is a right $PF$-ring.

\item $R$ is a semiperfect right self-injective ring with essential right
socle.

\item $R$ is a right finitely cogenerated right self-injective ring.

\item $R$ is a right Kasch right self-injective ring.
\end{enumerate}
\end{proposition}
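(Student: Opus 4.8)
The plan is to prove the equivalences in a cycle, exploiting the fact that all four conditions presuppose right self-injectivity, so the real content lies in relating the four finiteness/cogeneration hypotheses: being a cogenerator, semiperfect with essential socle, finitely cogenerated, and Kasch. First I would recall the standard fact that a right $PF$-ring is by definition a right self-injective cogenerator, and that a module is a cogenerator precisely when it contains a copy of the injective hull of every simple right $R$-module. This observation is the bridge between $(1)$ and the Kasch condition in $(4)$: a Kasch ring is one in which every simple right module embeds in $R_R$, and when $R_R$ is injective, such an embedding immediately yields a copy of the injective hull $E(S)$ inside $R_R$.

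For $(1)\Longrightarrow(2)$, I would argue that an injective cogenerator is automatically finitely cogenerated and has essential socle; semiperfectness then follows because a right self-injective ring with finitely generated essential right socle is semiperfect, a fact that can be extracted from the structure theory already invoked in Lemma \ref{P is f. cogen} and the proof of Proposition \ref{weak+Kasch}. For $(2)\Longrightarrow(3)$, the key point is that semiperfect plus essential right socle forces the socle to be finitely generated (the socle decomposes according to the finitely many basic idempotents), and finitely generated essential socle is exactly the condition for $R_R$ to be finitely cogenerated. The implication $(3)\Longrightarrow(4)$ proceeds by showing that a finitely cogenerated self-injective ring must be Kasch: since $R_R$ is finitely cogenerated, its socle is finitely generated and essential, and self-injectivity lets one embed the injective hull of each simple module into $R_R$, placing a copy of every simple in the socle.

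The implication I expect to carry the most weight is $(4)\Longrightarrow(1)$, and I would handle it along the lines of the embedding argument already used in the fourth Proposition of Section 2 (the characterization of right $PF$-rings via cogenerator and weak $CS$). Given that $R$ is right Kasch and right self-injective, each simple right module $S_i$ embeds in $R_R$, hence its injective hull $E(S_i)$, being a direct summand of the injective module $R_R$, sits inside $R_R$; running over a complete set of representatives of the simple right modules shows that $R_R$ contains the injective hull of every simple, so $R_R$ cogenerates every semisimple module and therefore every module. Thus $R_R$ is an injective cogenerator, i.e.\ $R$ is right $PF$. The main obstacle is verifying that the Kasch hypothesis supplies \emph{enough} simples—that the set of isomorphism classes of simple right modules is finite and each is accounted for inside the socle; this is precisely where one must combine Kasch with self-injectivity to conclude finite cogeneration, rather than merely exhibiting individual embeddings.
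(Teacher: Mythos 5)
The paper offers no proof of Proposition \ref{PF rings} at all: it is stated as a recollection of the classical Osofsky--Utumi--Kato characterization of right $PF$-rings (Theorem 1.56 of Nicholson--Yousif, which the paper has already cited for exactly this purpose in Section 2). So your argument must stand on its own, and it does not, because the one implication containing all the depth of this theorem is precisely the one you declare to be automatic. In your step $(1)\Longrightarrow (2)$ you assert that ``an injective cogenerator is automatically finitely cogenerated and has essential socle.'' That assertion \emph{is} Osofsky's theorem, the hard core of the whole proposition. The cogenerator hypothesis gives a copy of $E(S)$ inside $R_{R}$ for every simple $S$, but nothing formal forces the set of isomorphism classes of simple modules to be finite or the socle to be essential: for modules this is simply false (over a ring with infinitely many simples, the minimal injective cogenerator $E(S_{1})\oplus E(S_{2})\oplus \cdots $ is not finitely cogenerated), so any proof must exploit the ring structure of $R$, which is what Osofsky's orthogonal-idempotent counting argument does. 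Granted that finiteness, semiperfectness does follow as you say, since then $R_{R}=E(S_{1})\oplus \cdots \oplus E(S_{n})$ with each summand having local endomorphism ring; but granting it is assuming the theorem. Since every route from the cogenerator/Kasch side of the equivalence to the semiperfect/finitely cogenerated side must cross this bridge, your cycle never proves the proposition.

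There are two smaller gaps. Your $(3)\Longrightarrow (4)$ is circular as written: ``self-injectivity lets one embed the injective hull of each simple module into $R_{R}$'' presupposes that each simple embeds in $R_{R}$, which is exactly the Kasch condition being proved. It can be repaired: from $(3)$, $R_{R}=E(S_{1})\oplus \cdots \oplus E(S_{n})$ is semiperfect, the isomorphism classes of the $S_{i}$ biject with those of the indecomposable projectives $E(S_{i})$ (injective hulls and socles determine each other here), hence with the classes of all simples $E(S_{i})/E(S_{i})J$ (uniqueness of projective covers); as these are finite sets, the inclusion of socle-simples into all simples is an equality. Also, in $(2)\Longrightarrow (3)$, ``semiperfect plus essential right socle forces the socle to be finitely generated'' is false without self-injectivity (the local ring $k[x_{1},x_{2},\dots ]/(x_{i}x_{j})$ is semiperfect with essential but infinitely generated socle); you need injectivity to see that each $e_{i}R$ is uniform, so that its socle is simple. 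Your $(4)\Longrightarrow (1)$, by contrast, is correct --- and it is in fact the easy direction, not, as you suggest, the one carrying the most weight.
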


\begin{theorem}
\label{Kasch+almost}If $R$ is right Kasch right almost-injective, then $R$
is semiperfect.
\end{theorem}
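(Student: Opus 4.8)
The plan is to mimic the argument of Proposition \ref{weak+Kasch}, replacing the weak $CS$ hypothesis by almost-injectivity and letting injectivity supply two ingredients that $GC2$ supplied there. First I would fix the almost-injective decomposition $R_{R}=E\oplus K$ with $E=eR$ injective and $\mathrm{rad}(K)=0$. As a cyclic direct summand of $R$, the module $E$ is finitely generated, projective and, being injective, is a $CS$ (extending) module; moreover $J(R)=\mathrm{rad}(E)\oplus \mathrm{rad}(K)=\mathrm{rad}(E)\subseteq E$. The advantage of injectivity over mere $CS$ is that an indecomposable injective module automatically has a local endomorphism ring, so the role played by $GC2$ in Proposition \ref{weak+Kasch} (forcing each indecomposable summand to have local endomorphism ring) is now free.

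Next I would show that $E$ has finitely generated essential socle by applying Lemma \ref{P is f. cogen} to $P=E$: since $E$ is finitely generated, quasi-projective and $CS$, it suffices to verify $|\Omega(E)|\le |C(E)|$, and the lemma then returns equality together with a finitely generated essential socle. Granting this, $E$ is a finitely cogenerated injective module, so $E=Q_{1}\oplus \cdots \oplus Q_{n}$ with $Q_{i}=I(S_{i})$ indecomposable injective and $soc(E)=S_{1}\oplus \cdots \oplus S_{n}$. Each $Q_{i}$ is a direct summand of $R$, hence projective, and indecomposable injective, hence has a local endomorphism ring; thus $Q_{i}=g_{i}R$ for a local idempotent $g_{i}$ and $Q_{i}$ is the projective cover of the simple module $g_{i}R/g_{i}J$.

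I then have to account for $K$, and this is where the situation genuinely differs from Proposition \ref{weak+Kasch}. There the weak $CS$ decomposition of Lemma \ref{charac of weak} forces $soc(K)=0$, so that by the Kasch hypothesis every simple module, being embeddable in $R$, already embeds in $E$; almost-injectivity only gives $\mathrm{rad}(K)=0$, so $soc(K)$ may be nonzero and this reduction fails. What is cheap to extract is that every simple submodule $S$ of $K$ splits off: since $\mathrm{rad}(K)=0$ there is a maximal submodule $N$ with $S\not\subseteq N$, whence $S\cap N=0$ and $S+N=K$, so $K=S\oplus N$ and $S$ is projective, i.e. its own projective cover. The plan is then to combine the $Q_{i}$ with the simple projective summands of $K$ to exhibit $R$ as a finite direct sum of local (projective-cover) summands, and conclude that $R$ is semiperfect by the theorem of Bass that a ring is semiperfect exactly when every simple right module has a projective cover.

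The hard part will be the interface between $E$ and $K$, and it enters in two places. First, to run Lemma \ref{P is f. cogen} I must deduce $|\Omega(E)|\le |C(E)|$ from the Kasch hypothesis; the obvious map sends a simple quotient of $E$ to its isomorphism type as a submodule of $R$, but Kasch only places that submodule in $R=E\oplus K$, not in $E$, so I must rule out, or otherwise absorb, a simple quotient of $E$ that embeds only in $K$. Second, I must upgrade "every simple submodule of $K$ splits off" to "$K$ is a finite direct sum of simple projectives," which again requires finiteness of $soc(K)$. Both difficulties reduce to the single assertion that the Kasch condition, in the presence of one injective summand, forces $E$ and $K$ to be finitely cogenerated; securing this finiteness, rather than the subsequent bookkeeping with local idempotents, is where the real work lies.
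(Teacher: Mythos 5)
Your skeleton does match the paper's proof (decompose $R_{R}=E\oplus K$, extract a finitely generated essential socle for $E$ from a cogeneration lemma, turn the indecomposable injective--projective summands $Q_{i}$ into projective covers, use the splitting of simple submodules of $K$, finish with Bass's criterion), but the proposal stops exactly where the proof has to happen: both difficulties you name are left unresolved, and the idea the paper uses to get past them is one you did not find, namely a singular/nonsingular dichotomy. For your difficulty (a): every simple quotient $E/N$ of $E$ does embed in $E$. If $E/N$ is nonsingular, then $N$ is closed in $E$ (a proper essential extension of $N$ inside $E$ would put a nonzero singular submodule inside $E/N$), and a closed submodule of the injective module $E$ is a direct summand, so $E/N$ is isomorphic to a complement of $N$ in $E$. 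If $E/N$ is singular, Kasch embeds it in $E\oplus K$, and it cannot embed in $K$: by your own observation a simple submodule of $K$ is a direct summand of $R_{R}$, and a nonzero direct summand $eR$ is never singular, since the right annihilator of $e$ meets $eR$ trivially. Either way $|\Omega (E)|\leq |C(E)|$, so Lemma \ref{P is f. cogen} (the paper instead quotes \cite[Lemma 18]{Gomez&Asensio}) yields the finitely generated essential socle of $E$.

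Your difficulty (b) is a wrong turn: finite cogeneration of $K$ is never needed, and trying to establish it before semiperfectness is hopeless, since it is essentially equivalent to the conclusion. Because you end with Bass's criterion, all that is required is that every simple right module have a projective cover. By Kasch and projection into the two summands, every simple embeds in $E$ or in $K$. A simple embedding in $K$ splits off $R_{R}$, hence is projective and is its own projective cover; no global decomposition of $K$ into finitely many simples is needed. A simple embedding in $E$ lies in $soc(E)$, hence represents one of the finitely many isomorphism classes $V_{1},\dots ,V_{m}$ occurring there; each $Q_{i}/J(Q_{i})$ is a simple quotient of $E$, hence by the dichotomy above embeds in $E$, and since the $Q_{i}/J(Q_{i})$ are pairwise non-isomorphic, counting within a finite set shows $\{Q_{i}/J(Q_{i})\}$ exhausts $\{V_{1},\dots ,V_{m}\}$, so each $V_{i}$ has projective cover $Q_{i}$. (The paper runs this counting over the singular simples, disposes of the nonsingular simples by projectivity, and treats the degenerate case $J(R)=0$, where $E$ may vanish, separately; the effect is the same.) So the concrete gap is the dichotomy argument showing every simple quotient of $E$ embeds in $E$ --- the one step your plan defers --- together with the redirection of the treatment of $K$ away from finite cogeneration and toward Bass's criterion applied one simple at a time.
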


\begin{proof}
Let $R$ be right Kasch and $R_{R}=E\oplus T$, where $E$ is injective and $T$
has zero radical. If $J=0$, then every simple right ideal of $R$ is
projective and $R$ is semiperfect (for $R$ is right Kasch). Now suppose that 
$J\neq 0$. Clearly, every simple singular right $R$-module embeds in $E$. In
particular, every simple quotient of $E$ is isomorphic to a simple submodule
of $E$, and so $E$ is a finitely generated injective and projective module
containing a copy of every simple quotient of E. By \cite[Lemma 18]%
{Gomez&Asensio}, $E$ has a finitely generated essential socle. Then by
hypothesis, there exist simple submodules $S_{1},...,S_{n}$ of $E$ such that 
$\{S_{1},...,S_{n}\}$ is a complete set of representatives of the
isomorphism classes of simple singular right $R$-modules. Since $E$ is
injective, there exist submodules $Q_{1},...,Q_{n}$ of $E$ such that $%
Q_{1}\oplus $\textperiodcentered \textperiodcentered \textperiodcentered $%
\oplus Q_{n}$ is a direct summand of $E$ and $(S_{i})_{R}\subseteq
^{ess}(Q_{i})_{R}$ for $i=1,2,...,n$. Since $Q_{i}$\ is an indecomposable
injective $R$-module, it has a local endomorphism ring. The projectivity of $%
Q_{i}$ implies that $J(Q_{i})$ is maximal and small in $Q_{i}$. Then $Q_{i}$
is the projective cover of the simple module $Q_{i}/J(Q_{i})$. Note that $%
Q_{i}\cong Q_{j}$ clearly implies $Q_{i}/J(Q_{i})\cong Q_{j}/J(Q_{j})$ and
the converse also holds because every module has at most one projective
cover up to isomorphism. But it is clear that $Q_{i}\cong Q_{j}$ if and only
if $S_{i}\cong S_{j}$, if and only if $i=j$. Moreover, every $Q_{i}/J(Q_{i})$
is singular. Thus, $\{Q_{1}/J(Q_{1}),...,Q_{n}/J(Q_{n})\}$ is a complete set
of representatives of the isomorphism classes of the simple singular right $%
R $-modules. Hence, every simple singular right $R$-module has a projective
cover. Since every non-singular simple right $R$-module is projective, we
conclude that $R$ is semiperfect.
\end{proof}

\begin{proposition}
\label{Rad&Kasch}The following are equivalent:

\begin{enumerate}
\item $R$ is a right $PF$-ring.

\item $R$ is a semiperfect right rad-injective ring with $soc(eR)\neq 0$ for
each local idempotent $e$ of $R$.

\item $R$ is a right finitely cogenerated right rad-injective ring.

\item $R$ is a right Kasch right rad-injective ring.

\item $R$ is a right rad-injective ring and the dual of every simple left $R$%
-module is simple.
\end{enumerate}
\end{proposition}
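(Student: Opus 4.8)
The plan is to read all five conditions as characterizations of the right $PF$ property and to hang the whole argument on two earlier results: the self-injective characterization in Proposition \ref{PF rings} and the upgrade statement Proposition \ref{semilocal}(2). The forward implications are the routine part. Every right $PF$-ring is by definition right self-injective, hence in particular right rad-injective; recalling further that a right $PF$-ring is semiperfect with finitely generated essential right socle, is right Kasch, and satisfies the dual-simple condition (all of which is contained in, or immediate from, Proposition \ref{PF rings} together with the classical self-injective theory of $PF$ rings in \cite{Nich&You}), each of $(1)\Rightarrow(2)$, $(1)\Rightarrow(3)$, $(1)\Rightarrow(4)$ and $(1)\Rightarrow(5)$ falls out at once. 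The substance therefore lies entirely in the four converse implications.

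The single engine driving every converse is the following observation: \emph{if one can first show that $R$ is semilocal, then Proposition \ref{semilocal}(2) promotes the rad-injectivity of $R_{R}$ to genuine right self-injectivity}, after which the matching clause of Proposition \ref{PF rings} delivers $(1)$. So in each converse my first goal is to establish semilocality, in fact semiperfectness, and only afterwards do I invoke the upgrade. The case $(2)\Rightarrow(1)$ is the cleanest model: semiperfectness is assumed outright, so after the upgrade $R$ is semiperfect and right self-injective, each $eR$ with $e$ a local idempotent is then a direct summand of $R_R$ and hence an indecomposable injective, that is uniform, module, so the hypothesis $soc(eR)\neq 0$ forces $soc(eR)\subseteq^{ess}eR$; summing over a basic set of local idempotents gives $S_{r}\subseteq^{ess}R_{R}$, and Proposition \ref{PF rings}(2) applies.

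For $(4)\Rightarrow(1)$ I would prove the rad-injective analogue of Theorem \ref{Kasch+almost}, namely that a right Kasch right rad-injective ring is semiperfect: the argument of Theorem \ref{Kasch+almost} adapts, using that every simple right module now embeds in $R_{R}$ to pin down a finite complete set of representatives among the simple submodules and to build the requisite projective covers; once semiperfect, upgrade to self-injective and quote Proposition \ref{PF rings}(4). For $(3)\Rightarrow(1)$ the hypothesis supplies a finitely generated essential right socle, and the task is to extract semilocality from this together with rad-injectivity, the rad-injective counterpart of the fact that a finitely cogenerated self-injective ring is semiperfect, after which the upgrade and Proposition \ref{PF rings}(3) finish. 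For $(5)\Rightarrow(1)$ I would translate the condition that the dual of every simple left $R$-module is simple, via the $\mathrm{Hom}_{R}(-,R)$-duality on simple one-sided modules, into an embedding of each simple left module into ${}_{R}R$, i.e. a Kasch-type property, and feed this into the preceding analysis so as to reach the classical self-injective-plus-dual-simple description of $PF$ rings in \cite{Nich&You}.

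The main obstacle is exactly this need to establish semilocality \emph{before} the rad-injective-to-injective upgrade can be used. Because Proposition \ref{semilocal}(2) presupposes that $R$ is semilocal, the semilocality arguments in $(3)$, $(4)$ and $(5)$ cannot themselves lean on full self-injectivity; they must be run from the bare extension property of rad-injectivity, namely extendability of homomorphisms out of submodules of $J$. The Kasch case $(4)$ gives the template, but $(3)$ is the delicate one: bounding the number of isomorphism types of simple factors and lifting the resulting orthogonal idempotents modulo $J$ using only that weak extension property is where the real care is required, and making $(5)$ feed into this cleanly depends on the duality being exact enough on simples, which is where I expect the remaining effort to concentrate.
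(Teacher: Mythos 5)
Your skeleton (prove semiperfectness first, then use Proposition \ref{semilocal}(2) to upgrade rad-injectivity to self-injectivity, then quote Proposition \ref{PF rings}) is exactly the paper's strategy for $(2)$, $(4)$ and $(5)$, and your treatments of $(2)\Rightarrow(1)$ and $(4)\Rightarrow(1)$ are sound --- indeed for $(4)$ you need not ``adapt'' the proof of Theorem \ref{Kasch+almost} at all: by \cite[Proposition 2.5]{Zey\&Huis\&Amin} a right rad-injective ring is already right almost-injective, so Theorem \ref{Kasch+almost} applies verbatim as a black box. The problem is that the two implications you explicitly defer, $(3)\Rightarrow(1)$ and $(5)\Rightarrow(1)$, are left as acknowledged open tasks rather than proved, and in both cases the path you sketch is not the one that works. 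The key idea you are missing is the decomposition $R=E\oplus K$ with $E$ injective and $rad(K)=0$, available for any right rad-injective ring by \cite[Proposition 2.5]{Zey\&Huis\&Amin}. With it, $(3)\Rightarrow(1)$ requires no delicate semilocality argument ``from the bare extension property'' at all: if $R$ is finitely cogenerated, then $K$ is a finitely cogenerated module with zero radical, hence semisimple, and \cite[Corollary 8]{You\&Zhou} then gives that $R$ is right $PF$ directly. Your proposed route --- bounding simple factors and lifting idempotents modulo $J$ using only rad-injectivity --- is precisely the hard work the paper's decomposition avoids, and you give no argument for it.

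For $(5)\Rightarrow(1)$ your translation is also off target: the dual of a simple \emph{left} module is a \emph{right} module, and asking it to be simple does not yield an embedding of simple left modules into ${}_{R}R$, i.e.\ it does not produce a Kasch condition on the side you need; feeding it into the $(4)$-analysis cannot work as stated. The paper instead combines rad-injectivity with the dual-simple hypothesis to conclude that $R$ is right $min$-$CS$ (\cite[Proposition 2.10]{Zey\&Huis\&Amin}), then invokes G\'{o}mez Pardo--Yousif \cite[Theorem 2.1]{Gomez\&You} to get $R$ semiperfect with essential right socle; only at that point does the upgrade Proposition \ref{semilocal}(2) enter, and Proposition \ref{PF rings} finishes. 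So while your global architecture is right, the two genuinely nontrivial implications are missing, and filling them requires the $E\oplus K$ decomposition and the $min$-$CS$ route, neither of which appears in your proposal.
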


\begin{proof}
$(1)\Leftrightarrow (2)$ By Proposition \ref{semilocal} (2).

$(1)\Rightarrow (3)$ Clear.

$(3)\Rightarrow (1)$ Since $R$ is a right rad-injective ring, it follows
from \cite[Proposition 2.5]{Zey&Huis&Amin} that $R=E\oplus K$, where $E$ is
injective and $K$ has zero radical. Since $R$ is a right finitely
cogenerated ring, $K$ is a finitely cogenerated right $R$-module with zero
radical. Hence, $K$ is semisimple. Therefore, by \cite[Corollary 8]{You&Zhou}%
, $R$ is a right $PF$-ring.

$(1)\Rightarrow (4)$ Clear.

$(4)\Rightarrow (1)$ If $R$ is right Kasch right rad-injective, then $R$ is
right almost-injective (\cite[Proposition 2.5]{Zey&Huis&Amin}). Thus $R$ is
semiperfect (\ref{Kasch+almost}). Hence $R$ is injective by Proposition \ref%
{semilocal} (2). Therefore, $R$ is right $PF$.

$(1)\Rightarrow (5)$ Since every right $PF$-ring is left Kasch and left
mininjective, the dual of every simple left $R$-module is simple by \cite[%
Proposition 2.2]{mininjective}.

$(5)\Rightarrow (1)$ By \cite[Proposition 2.10]{Zey&Huis&Amin}, $R$ is a
right $min-CS$ ring (i.e., every minimal right ideal of $R$ is essential in
a summand). Thus, by \cite[Theorem 2.1]{Gomez&You}, $R$ is semiperfect with
essential right socle. Proposition \ref{semilocal} (2) entails that $R$ is
right self-injective, and hence right $PF$ by Proposition \ref{PF rings}.

$(1)\Longleftrightarrow (4)$ and $(1)\Longleftrightarrow (5)$ are direct
consequences of \cite[Proposition 2.5]{Zey&Huis&Amin}.
\end{proof}

A result of Osofsky \cite[Proposition 2.2]{Osofisky} asserts that a ring $R$
is $QF$ if and only if $R$ is a left perfect, left and right self-injective
ring. This result remains true for rad-injective rings.

\begin{proposition}
The following are equivalent:

\begin{enumerate}
\item $R$ is a quasi-Frobenius ring.

\item $R$ is a left perfect, left and right rad-injective ring.
\end{enumerate}
\end{proposition}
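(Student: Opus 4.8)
The plan is to derive this as a corollary of Osofsky's theorem, cited just above as \cite[Proposition 2.2]{Osofisky}, by showing that under the hypothesis of left perfectness the two-sided rad-injectivity in $(2)$ is automatically upgraded to two-sided self-injectivity. The forward implication $(1)\Rightarrow(2)$ is the routine one: if $R$ is quasi-Frobenius, then $R$ is two-sided artinian, hence in particular left perfect, and $R$ is left and right self-injective. Since every self-injective module is rad-injective (rad-injectivity being a weakening of injectivity), $R$ is left and right rad-injective, giving $(2)$.

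For the substantive direction $(2)\Rightarrow(1)$, first I would observe that a left perfect ring is semiperfect and hence semilocal, meaning $R/J$ is semisimple; note that semilocality is a two-sided property. The key step is then to invoke Proposition \ref{semilocal}$(2)$: over a semilocal ring every rad-injective right module is injective. Applying this to $R_{R}$, the right rad-injective ring $R$ is in fact right self-injective. By the left-handed analogue of Proposition \ref{semilocal}$(2)$, valid by the same argument since $R$ is also left semilocal, the left rad-injective module $_{R}R$ is left self-injective as well. Thus $R$ is a left perfect, left and right self-injective ring, and Osofsky's result \cite[Proposition 2.2]{Osofisky} immediately yields that $R$ is quasi-Frobenius.

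The only genuinely delicate point is ensuring that Proposition \ref{semilocal}$(2)$ really does apply on both sides. Since that proposition is stated for right modules, I would spell out that the identical proof works for left modules once one knows $R$ is left semilocal, which follows from left perfectness; the semilocal splitting $K+L=R$ with $K\cap L\subseteq J$ used in its proof is available symmetrically. With both-sided self-injectivity secured, everything reduces cleanly to the classical Osofsky characterization, so no further computation is needed.
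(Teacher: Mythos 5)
Your proposal is correct and follows essentially the same route as the paper: reduce left perfectness to semilocality, apply Proposition \ref{semilocal}(2) on both sides to upgrade rad-injectivity to self-injectivity, and conclude via Osofsky's characterization of quasi-Frobenius rings. The paper's own proof is just a terser version of this, leaving implicit the left-handed application of Proposition \ref{semilocal}(2) that you spell out explicitly.
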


\begin{proof}
$(1)\Rightarrow (2)$ It is well known.

$(2)\Rightarrow (1)$ By hypothesis, $R$ is a semiperfect right and left
rad-injective ring. By Proposition \ref{semilocal} (2), $R$ is right and
left injective, hence $R$ is quasi-Frobenius.
\end{proof}

Note that the ring of integers $Z$ is an example of a commutative noetherian
almost-injective ring which is not quasi-Frobenius.

\begin{definition}
A ring $R$ is called right $CF$-ring ($FGF$-ring) if every cyclic (finitely
generated) right $R$-module embeds in a free module. It is not known whether
right $CF$-rings ($FGF$-rings) are right artinian (quasi-Frobenius rings).
In the next result, a positive answer is given if we assume in addition that
the ring $R$ is right rad-injective.
\end{definition}

\begin{proposition}
The following are equivalent:

\begin{enumerate}
\item $R$ is quasi-Frobenius.

\item $R$ is right $CF$ and right rad-injective.
\end{enumerate}
\end{proposition}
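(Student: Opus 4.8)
The plan is to prove the equivalence of quasi-Frobenius with ``right $CF$ and right rad-injective.'' The direction $(1)\Rightarrow(2)$ is routine: every quasi-Frobenius ring is right artinian, hence right noetherian, so every cyclic (indeed every finitely generated) right module embeds in a free module, giving the right $CF$ property; and quasi-Frobenius rings are right self-injective, hence trivially right rad-injective. So the substance lies in $(2)\Rightarrow(1)$.

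For $(2)\Rightarrow(1)$, the first step is to reduce rad-injectivity to almost-injectivity. By \cite[Proposition 2.5]{Zey&Huis&Amin} a right rad-injective ring decomposes as $R_{R}=E\oplus K$ with $E$ injective and $K$ of zero radical; in particular $R$ is right almost-injective. The crucial observation is that the right $CF$ hypothesis forces $R$ to be right Kasch: every simple right $R$-module is cyclic, hence embeds in a free module $R^{(n)}$, and therefore embeds in $R_{R}$ itself (projecting onto a coordinate where the image is nonzero, the restriction to the simple module is monic). Thus every simple right module embeds in $R_{R}$, which is precisely the right Kasch condition. Now Theorem~\ref{Kasch+almost} applies to give that $R$ is semiperfect, and then Proposition~\ref{Rad&Kasch} (the equivalence $(1)\Leftrightarrow(4)$, right Kasch plus right rad-injective implies right $PF$) shows $R$ is a right $PF$-ring, hence right self-injective.

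It remains to upgrade right self-injectivity to quasi-Frobenius, i.e.\ to establish that $R$ is right (or left) artinian. Here I would invoke the right $CF$ condition again together with a known result on $CF$-rings that are already self-injective or Kasch: since $R$ is now right self-injective and right $CF$, one wants to conclude the ring is right noetherian (or right perfect), which combined with self-injectivity yields quasi-Frobenius. The cleanest route is to appeal to the artinianness results of the earlier section or to the cited Gómez-Pardo--Guil Asensio machinery on $FGF$/$CF$ rings once the semiperfect self-injective structure is in hand; a semiperfect right self-injective ring with essential right socle that is right $CF$ has finitely generated essential socle and descending chain conditions forced by the embeddings of cyclics.

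The hard part will be this final artinian/noetherian step: right self-injectivity alone does not give quasi-Frobenius (one must rule out infinite-dimensional pathologies), so the argument must genuinely use the $CF$ embedding of \emph{every} cyclic module, not merely of the simple modules, to secure a chain condition. I expect the decisive input to be a previously established theorem showing that a right $PF$ (equivalently semiperfect right self-injective with essential socle) ring which is right $CF$ must be right artinian, after which right artinian plus right self-injective is quasi-Frobenius by definition.
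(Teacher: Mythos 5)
Your argument for $(2)\Rightarrow(1)$ coincides with the paper's up to the point where $R$ is shown to be right self-injective: like the paper, you extract right Kasch from the $CF$ hypothesis (a simple module is cyclic, embeds in a free module, and a suitable coordinate projection is monic on it), and then right Kasch plus right rad-injective yields right $PF$ via Theorem \ref{Kasch+almost} and Proposition \ref{Rad&Kasch}. But your final step is a genuine gap, not a proof: you defer the passage from ``right $PF$ and right $CF$'' to ``right artinian'' to an unspecified previously established theorem, and no such result is available to you. The artinian theorem in Section 2 of the paper requires weak $CS$ ($s$-$CS$) and $GC2$ hypotheses that you have not verified here, and the G\'{o}mez Pardo--Guil Asensio results cited in the paper are invoked for $FGF$-type conclusions, not for this step. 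The paper closes the argument concretely and briefly: since $R$ is right $PF$, the socle $S_{r}$ is finitely generated and essential in $R_{R}$; a cyclic right module embeds by $CF$ in a free module, and being one-generated its image lies in a finite subsum, so it embeds in $R^{n}$ for some finite $n$; since $soc(R^{n})=(S_{r})^{n}$ is finitely generated and essential in $R^{n}$, every cyclic right $R$-module has finitely generated essential socle, i.e.\ is finitely cogenerated; by V\'{a}mos \cite[Proposition 2.2]{Vamos} a ring all of whose cyclic right modules are finitely cogenerated is right artinian, and right artinian together with right self-injective gives quasi-Frobenius. This is precisely the ``decisive input'' you anticipated but did not supply, and note that it uses the $CF$ hypothesis a second time, on all cyclics rather than just the simples, exactly as you predicted it must.

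A smaller defect: your justification of $(1)\Rightarrow(2)$ is a non sequitur, since right noetherian does not imply that cyclic modules embed in free modules. The correct reason is that over a quasi-Frobenius ring injective modules are projective, so any module embeds in its injective hull, which is a direct summand of a free module and hence embeds in a free module. The paper dismisses this direction as well known, so only your stated reasoning, not the claim itself, needs repair.
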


\begin{proof}
$(1)\Rightarrow (2)$ It is well known.

$(2)\Rightarrow (1)$ Since every simple right $R$-module embeds in $R$, $R$
is a right Kasch ring. By Proposition \ref{Rad&Kasch}, $R$ is right
self-injective with finitely generated essential right socle. Thus, every
cyclic right $R$-module has a finitely generated essential socle, and by 
\cite[Proposition 2.2]{Vamos}, $R$ is right artinian, hence quasi-Frobenius.
\end{proof}

\end{document}